\documentclass[11pt]{amsart}

\usepackage{amssymb,mathtools,color,tikz-cd}
\usepackage[letterpaper,textwidth=5.5in,textheight=8.5in,centering]{geometry}
\usepackage[hidelinks]{hyperref}

\title{Algebraic geometric framework of Rogers--Ramanujan identities}

\author{Yifeng Huang}
\address{Department of Mathematics, Suite W401,
400 Dowman Drive, Emory University,
Atlanta, GA 30322}\email{yifeng.huang@emory.edu}
\author{Kenny Lau}
\address{Axiom Math, 124 University Avenue, Palo Alto, CA 94301} \email{kenny@axiommath.ai}
\author{Ken Ono}
 \address{Axiom Math, 124 University Avenue, Palo Alto, CA 94301}\email{ken@axiommath.ai}
\author{Peter Paule}
\address{Research Institute for Symbolic Computation (RISC),
Johannes Kepler University, A-4040 Linz, Austria,
and Center for Applied Mathematics (TCAM),
Tianjin University, Tianjin 300072, China}\email{Peter.Paule@risc.jku.at}
\date{}
\newtheorem{theorem}{Theorem}
\newtheorem{prop}[theorem]{Proposition}

\newtheorem{lemma}[theorem]{Lemma}

\newtheorem{conjecture}[theorem]{Conjecture}

\newtheorem{remark}[theorem]{Remark}
\theoremstyle{remark}
\newtheorem*{remark*}{Remark}
\theoremstyle{definition}

\newcommand{\qPochN}[3]{\left(#1;#2\right)_{#3}}        
\newcommand{\qbinom}[2]{\genfrac{[}{]}{0pt}{}{#1}{#2}_{\!q}} 

\newcommand{\Z}{\mathbb{Z}}
\newcommand{\Q}{\mathbb{Q}}
\newcommand{\R}{\mathbb{R}}

\newcommand{\Fq}{\mathbb{F}_q}
\newcommand{\defn}{\textbf}

\DeclarePairedDelimiter{\abs}{\lvert}{\rvert}
\DeclarePairedDelimiter{\set}{\{}{\}}
\usepackage{xspace}
\def\huang{the first author\xspace}
\def\lau{the second author\xspace}
\def\ono{the fourth author\xspace}

\newcommand{\dast}{\mathbin{{\ast}{\ast}}}

\newcommand{\qbin}[3]{\genfrac{[}{]}{0pt}{}{#1}{#2}_{#3}}

\def\KK{\mathbb K}

\newcounter{mmacnt}
\def\restartmma{\setcounter{mmacnt}{0}}
\restartmma \catcode`|=\active
\def|#1|{\mathrm{#1}}
\catcode`|=12
\newenvironment{mma}{
  \par
 \catcode`|=\active
 \parskip=2pt\parindent=0pt 
 \small
 \def\In##1\\{%
   \def\linebreak{\hfill\break\null\Quad}%
   \refstepcounter{mmacnt}
   \hangindent=2.5em\hangafter=0
   \leavevmode
   \llap{\tiny\sffamily In[\arabic{mmacnt}]:=\kern.5em}%
   \mathversion{bold}\scriptsize$\tt\bf\displaystyle##1$\normalsize
   \mathversion{normal}\par
 }%
 \def\Print##1\\{%
   \def\linebreak{\hfill\break}%
   \hangindent=2.5em\hangafter=0
   \leavevmode\scriptsize ##1\par}%
 \def\Out##1\\{%
   \vspace*{-0.2cm}\def\linebreak{$\hfill\break\null\hfill$}%
   \kern\abovedisplayskip\par
   \hangindent=2.5em\hangafter=0
   \leavevmode
   \llap{\tiny\sffamily Out[\arabic{mmacnt}]=\kern.5em}
   \scriptsize$\displaystyle\tt##1$\normalsize\hfill\null\par
   \kern\belowdisplayskip\vspace*{-0.3cm}
 }%
 \def\Warning##1##2\\{%
   \def\linebreak{\hfill\break}%
   \hangindent=2.5em\hangafter=0
   \leavevmode
   {\scriptsize##1 : ##2}\par}%
}{%
 \par\smallskip
}

\newcommand{\LoadP}[1]{\fcolorbox{black}{white}{
\begin{minipage}[t]{12.5cm}
\footnotesize #1
\end{minipage}}}

\begin{document}

\begin{abstract}
The Rogers--Ramanujan identities equate a $q$-series whose exponents are
governed by a quadratic form with an infinite product supported on two
residue classes modulo~$5$. Identities of this shape are scarce, and a
central problem is to identify the structures that produce them in
families. Huang, Jiang, and Oblomkov have proposed a source of a new
kind: to each pair of coprime integers $a,b>1$ they attach an
infinite-rank $q$-series $Z_{a,b}(q)$, assembled from counts of
commuting nilpotent matrix pairs $(A,B)$ with $A^a=B^b$ over finite
fields, and they conjecture that it equals an explicit product of
$(a-1)(b-1)/2$ modular units of level $a+b$. The $a=2$ cases are the
Andrews--Gordon identities; no case with $a>2$ was known. We prove the
conjecture for $(a,b)=(3,4)$, $(3,5)$, $(3,7)$, and $(3,8)$. Our proofs
pass through a finer sum-to-sum identity, which we conjecture for all
$b$ coprime to $3$ and establish for all $b$ when $q=1$. Lau and Ono
have since proved that identity in general, and with it the full $a=3$
case. These identities have been formalized and verified in Lean by AxiomProver.
\end{abstract}

\keywords{Rogers--Ramanujan identities,  Andrews--Gordon identities, 
 Quot schemes; $q,t$-Catalan numbers;
$q$-Vandermonde summation}

\subjclass[2020]{Primary 11P84, 05A17, 14H20;
Secondary 05A19, 14C05, 20M14, 33D15, 68W30}

\maketitle

\section{Introduction}

The Rogers--Ramanujan (RR) identities
\begin{equation}\label{G}
G(q):=\sum_{n=0}^\infty\frac{q^{n^2}}{(1-q)\cdots(1-q^n)}
=\prod_{n=0}^\infty\frac{1}{(1-q^{5n+1})(1-q^{5n+4})}
\end{equation}
and
\begin{equation}\label{H}
H(q):=\sum_{n=0}^\infty\frac{q^{n^2+n}}{(1-q)\cdots(1-q^n)}
=\prod_{n=0}^\infty\frac{1}{(1-q^{5n+2})(1-q^{5n+3})}
\end{equation}
are among the most consequential identities in mathematics. Read off
coefficientwise, \eqref{G} asserts that for every $n$ the number of
partitions of $n$ into parts that differ by at least $2$ equals the
number of partitions of $n$ into parts congruent to $\pm 1 \pmod 5$,
while \eqref{H} asserts the same for partitions whose parts differ by at
least $2$ and exceed $1$, matched against partitions into parts
congruent to $\pm 2 \pmod 5$. A \emph{gap} condition on one side, a
\emph{congruence} condition on the other. Constraints of these two kinds
have no evident reason ever to precisely match for all $n.$

The identities are also statements about modular objects, and this is
the source of their depth. Both sides of \eqref{G} and \eqref{H} are, up
to a rational power of $q$, modular functions of level $5$, and their ratio $H(q)/G(q)$ is the
Rogers--Ramanujan continued fraction, whose special values Ramanujan
evaluated to remarkable effect \cite[\S 11]{PauleRadu2018}. An identity
of this kind therefore asserts that an object with no visible
modularity, a sum whose exponents come from a quadratic form, is in
fact modular, and to prove one is to exhibit the hidden structure
responsible.

The identities are not isolated. They are the first members of the
Andrews--Gordon identities, an infinite family in which the modulus $5$
is replaced by an arbitrary odd modulus $2k+3$ and the gap condition is
imposed on windows of $k$ consecutive parts. Analogues attached to
higher-rank root systems have since been produced by Bailey-type
machinery, among them the $\mathrm{A}_2$ Rogers--Ramanujan identities of
Andrews, Schilling, and Warnaar \cite{ASW1999} and the $\mathrm{A}_2$
Andrews--Gordon identities of Warnaar \cite{WarnaarAG}. Every such
family has the same silhouette: on one side a multiple $q$-series whose
exponents are values of a positive definite quadratic form and whose
summand is a product of Gaussian binomial coefficients, and on the other
a product of modular units. Families of this shape are rare, and each
one discovered so far has been connected to rich structure, such as
characters of modules over affine Lie algebras, order parameters of
exactly solvable lattice models, and chains of Bailey pairs. The governing
question is therefore less how to prove any single identity than what
sort of mathematical structure generates such a family at all.

\medskip

On the other hand, numerical semigroups with two generators are
classical objects in mathematics. For $a,b>1$ with $\gcd(a,b)=1$, let
$\Gamma=\langle a,b\rangle$ be the sub-semigroup of
$\mathbb{N}=\{0,1,\dots\}$ generated by $a$ and $b$, and let
$G=\mathbb{N}\setminus \Gamma$ be the set of gaps, of cardinality
$\abs{G}=(a-1)(b-1)/2$. It is a beautiful theorem of Sylvester in 1883
that the Frobenius element $f=\max G$ has the closed-form formula
$f=ab-a-b$. These numerical semigroups have rich combinatorics, algebra
and geometry. What will be relevant here is the poset structure on $G$
defined by $i\preceq j$ if and only if $j-i\in \Gamma$, and the map
$g:Y\to G$, where
\[ Y=\set{(x,y):x,y\in \Z_{\geq 1}, ab-ax-by>0},\quad g(x,y):=ab-ax-by.\]
It is known to Sylvester that $g$ is a bijection. We denote the inverse
images of $g$ by $(x(i),y(i)):=g^{-1}(i)$, called the Sylvester
coordinates of $i$. We have $i\preceq j$ if and only if $x(i)\geq x(j)$
and $y(i)\geq y(j)$. This bijection identifies $G$ with the Young
diagram $Y$ consisting of boxes in an $a\times b$ rectangle strictly on
one side of the diagonal, and identifies $\preceq$ with containment of
boxes. This viewpoint makes connections to algebraic combinatorics
natural; for example, sub-Young-diagrams of $Y$ are in natural bijection
with $a\times b$ rectangular Dyck paths, leading to connections to
$q,t$-Catalan numbers and to the geometry of the Hilbert scheme of
points on the singular curve $x^a=y^b$.

\medskip

These two stories meet by way of a source of Rogers--Ramanujan type
identities of a genuinely new kind: the enumerative geometry of plane curve singularities. For
coprime $a,b>1$, the \defn{torus knot singularity} $x^a=y^b$ has been
studied intensively through its Hilbert schemes of points, whose
generating function of point counts is the $a\times b$ rational
$q,t$-Catalan number and, by the Oblomkov--Rasmussen--Shende conjecture,
encodes the HOMFLY-PT invariant and the Khovanov--Rozansky homology of
the associated algebraic link \cite{GorskyMazin,ORS2018}. Replacing the
Hilbert scheme by the \defn{Quot scheme of points}, and then by the
\defn{stack of zero-dimensional coherent sheaves}, yields rank-$n$ and
rank-$\infty$ refinements of this picture
\cite{Huang2023,HuangJiang,Huang2024}. 
The rank-$n$ invariant is a lattice zeta function in the sense of Solomon \cite{Solomon1977}, counting finite-index submodules of a rank-$n$ torsion-free module. 
The rank-$\infty$ invariant is a Cohen--Lenstra type series counting finite modules weighted by the orders of their automorphism groups \cite{CohenLenstra}, and equivalently a count of matrix points in the sense of \cite{HOS2023}. 
Computations in \cite{Huang2023,HuangJiang} exposed a
phenomenon with no rank-one shadow: the rank-$\infty$ series appear to
evaluate to Rogers--Ramanujan type infinite products, with the rank-$n$
series serving as their finitizations. This is the state of affairs that
motivates the present work.

\medskip

\subsection{Can Rogers--Ramanujan type identities arise from the algebraic combinatorics of numerical semigroups?}

\medskip

Recently, \huang, Jiang, and Oblomkov \cite{HJO} defined a high-rank generalization of the rational $q$-Catalan number. More precisely, for each $n\geq 1$, they define a polynomial with nonnegative integer coefficients in $q$, called the rank-$n$ $a\times b$ rational $q$-Catalan number, that realizes the usual $a\times b$ rational $q$-Catalan number as the $n=1$ special case. Of particular interest is the $n\to \infty$ limit. To define it, first define $$U(x)=1_{x\geq 0}-1_{x\geq a}-1_{x\geq b}+1_{x\geq a+b},$$ a sum of step functions. Let $\mathbb{Z}^G$ be the set of integer-valued functions on $G$. Define a quadratic form on $\mathbb{Z}^G$ by 
\begin{equation}
    \label{def:quadratic-form}
    Q(\mathbf{n})=\sum_{i,j\in G} U(j-i) \,n_i n_j.
\end{equation}
Recall $(q)_m:=(q;q)_m$ is the $q$-Pochhammer symbol, and we adopt the convention $1/(q)_m=0$ for $m\in \mathbb{Z}_{<0}$. 

Now define
\begin{equation}\label{eq:inf-sum-def}
    Z_{a,b}(q):=\sum_{\mathbf{n}\in \mathbb{Z}^G} q^{Q(\mathbf{n})} \prod_{i\in G} \frac{(q)_{n_i-n_{i-a-b}}}{(q)_{n_i-n_{i-a}}(q)_{n_i-n_{i-b}}},
\end{equation}
where by convention $n_i=0$ for $i<0$. Due to the $q$-Pochhammers in the denominator, the summation vanishes outside the cone
\begin{equation}
    C_\mathbb{R}=\set*{\mathbf{n}\in \R^G: n_i\geq 0, n_i\geq n_j \text{ if }i-j\in \Gamma}.
\end{equation}
Notably, $Z_{a,b}(q)$ converges for $\abs{q}<1$ because it is proved in \cite{Huang2026} that $Q$ is positive definite on $C_\mathbb{R}$; in fact,
\[ Q(\mathbf{n})\geq \frac{1}{|G|^2}\lVert \mathbf{n}\rVert_2^2 \text{ for }\mathbf{n}\in C_\mathbb{R},\]
so only finitely many terms contribute to any given coefficient of $Z_{a,b}(q)$.

The infinite-rank $a\times b$ $q$-Catalan series $Z_{a,b}(q)$ arises from algebraic geometry as follows.

\begin{theorem}
    [Huang--Jiang--Oblomkov \cite{HJO}]
    Let $a,b>1$ be coprime and assume the notation above. For a prime power $q$, consider the variety of nilpotent matrix points on the algebraic curve $x^a=y^b$ whose set of $\Fq$-points is defined as
    \[ M_n(\Fq):=\set*{(A,B):A,B\in \mathrm{Nilp}_n(\Fq), AB=BA, A^a=B^b},\]
    and consider the infinite sum of nonnegative real numbers
    \[ S_q=\sum_{n=0}^\infty \frac{\abs*{M_n(\Fq)}}{\abs{\mathrm{GL}_n(\Fq)}} = \sum_{n=0}^\infty \frac{\abs*{M_n(\Fq)}}{(q^n-1)(q^n-q)\cdots (q^n-q^{n-1})}.\]

    Then
    \[ S_q=Z_{a,b}(q^{-1})\prod_{n=1}^\infty (1-q^{-n})^{-1}<\infty.\]
 \label{thm:HJO-conditional}
\end{theorem}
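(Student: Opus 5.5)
We plan to prove Theorem~\ref{thm:HJO-conditional} by converting the matrix count into a Cohen--Lenstra type sum over finite-length modules, and then evaluating that sum by stratifying the modules according to a filtration indexed by the gaps $G$.

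\emph{Step 1: reduction to modules.} Let $R=\Fq[[x,y]]/(x^a-y^b)$, the complete local ring of the singularity. A pair $(A,B)$ of commuting nilpotent matrices with $A^a=B^b$ is the same as an $R$-module structure on $\Fq^n$, where nilpotence means the module is supported at the origin. The orbit--stabilizer theorem then gives
\[ S_q=\sum_{M}\frac{1}{\abs{\mathrm{Aut}_R(M)}}, \]
with the sum over isomorphism classes of finite-length $R$-modules. Since $a$ and $b$ are coprime, the normalization is $\widetilde R=\Fq[[t]]$ via $x=t^b$, $y=t^a$, so $R=\Fq[[t^\Gamma]]$ is the semigroup ring of $\Gamma$ (after swapping the roles of $a$ and $b$ if needed). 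Finiteness of $S_q$ will follow at the end from the identity together with the convergence of $Z_{a,b}$ for $\abs{q}<1$, which uses the positive definiteness of $Q$ on $C_\R$ from \cite{Huang2026}.

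\emph{Step 2: passing to Quot-scheme counts.} We use the standard Cohen--Lenstra/Quot-scheme transfer, as in \cite{Huang2023,HuangJiang}. Counting surjections $R^N\to M$ and letting $N\to\infty$ gives
\[ S_q=\prod_{n\ge1}(1-q^{-n})^{-1}\cdot\lim_{N\to\infty}\sum_{L\subseteq R^N\ \text{finite colength}} q^{-N\dim(R^N/L)}\cdot(\text{correction}). \]
Equivalently, one can work with the rank-$n$ lattice zeta functions of $R^n$ and take the limit in $n$. We then need to compute the rank-$n$ count. For that, we filter each submodule $L\subseteq R^n\subseteq \widetilde R^n$ by $t$-adic valuation. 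For each $i\in\N$, let $L_i$ be the image of $L\cap t^i\widetilde R^n$ in $\Fq^n$. This gives a flag-like datum of subspaces $V_i\subseteq\Fq^n$ satisfying $V_i\subseteq V_{i+\gamma}$ for $\gamma\in\Gamma$. The data stabilize beyond the conductor $f+1$, and the only free indices are the gaps $i\in G$. Set $n_i=\dim$ of the relevant quotient. The containment constraints then produce exactly the cone $C_\R$: namely $n_i\ge n_j$ whenever $i-j\in\Gamma$.

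\emph{Step 3: counting each stratum.} The fibres over a fixed subspace datum should be affine spaces, by a Bia\l{}ynicki-Birula / cell-decomposition argument in the style of Piontkowski and Gorsky--Mazin for Hilbert schemes of $x^a=y^b$. Their dimensions are given by a quadratic expression in the $n_i$. The number of subspace configurations is a product of $q$-binomials. After taking $n\to\infty$, each factor becomes
\[ \frac{(q^{-1})_{n_i-n_{i-a-b}}}{(q^{-1})_{n_i-n_{i-a}}(q^{-1})_{n_i-n_{i-b}}}. \]
This is an inclusion--exclusion over the two generators, which is why the step function $U$ appears with signs at $0,a,b,a+b$: the relation $x^a=y^b$ is a single syzygy between the generators of degree $a$ and $b$. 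The quadratic form $Q(\mathbf n)=\sum U(j-i)n_in_j$ should then emerge as the total cell dimension.

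\emph{Main obstacle.} The main obstacle is Step 3. We must show that the strata really are affine bundles over products of Grassmannian-type varieties. We must also verify that the dimension count matches $Q$ exactly, including the boundary convention $n_i=0$ for $i<0$. This requires a careful analysis of how multiplication by $t^a$ and $t^b$ acts on the graded pieces, which is where the Sylvester coordinates of the gaps become relevant. Our first attempt would be to impose a normal form on generators, adapted from the rank-one Gorsky--Mazin description, to each graded piece. If the cells fail to be affine in general, we would instead count directly over $\Fq$ using orbit sums, which needs only polynomial-count identities rather than a genuine paving.
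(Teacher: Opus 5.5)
\textbf{Gap: Step 2 stratifies the wrong lattices, and the step that links $S_q$ to the gap-indexed count is missing.}

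The paper gives no proof of this statement; it is quoted from \cite{HJO}. So I can only judge your plan on its own terms.

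For a finite-colength submodule $L\subseteq R^N$, every element of $L$ lies in $R^N$, so its $t$-adic valuation is in $\Gamma$. Consequently your leading-coefficient spaces satisfy $V_i=0$ for \emph{every} gap $i\in G$. The $V_\gamma$ with $\gamma\in\Gamma$ are constrained only by monotonicity, and they need not stabilize at the conductor (take $L=t^m\widetilde R^{\,N}$ with $m>f$ large). So the stratification you describe is indexed by $\Gamma$, not by $G$, and cannot produce a sum over $\mathbb Z^G$.

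Gap-indexed data with $V_\gamma=\Fq^N$ for all $\gamma\in\Gamma$ arise instead for $R$-lattices sandwiched as $R^N\subseteq L\subseteq\widetilde R^{\,N}$. For those lattices your Step~3 looks essentially right.
\begin{itemize}
\item The configuration constraints are exactly $V_{i-a}+V_{i-b}\subseteq V_i$.
\item Choosing the $V_i$ row by row in Sylvester coordinates gives $\prod_{i\in G}\qbinom{n_{i+b}-n_{i-a}}{n_i-n_{i-a}}$, with $n_{i+b}:=N$ when $i+b\in\Gamma$. In the limit this reindexes to the HJO product; it is not an inclusion--exclusion.
\item With weight $q^{-N\dim(L/R^N)}$, a hand check for $(a,b)=(2,3),(2,5),(3,4)$ shows the fibres are affine spaces and the $N\to\infty$ limit is exactly $Z_{a,b}(q^{-1})$.
\end{itemize}

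What you are missing is the link from $S_q$ to this sandwiched count. That link is exactly where $\prod_{n\ge1}(1-q^{-n})^{-1}$ comes from, and your ``(correction)'' hides it.

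The surjection argument gives $S_q=\lim_N\sum_{L\subseteq R^N}q^{-N\dim(R^N/L)}$ on the nose, with no product in front. Indeed, with $\mathfrak m$ the maximal ideal, $\#\mathrm{Surj}_R(R^N,M)/\abs{M}^N=\prod_{0\le j<\dim M/\mathfrak mM}(1-q^{j-N})$ increases to $1$, so monotone convergence applies.

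By Gorenstein duality $L\mapsto\mathrm{Hom}_R(L,R)$, the lattices $R^N\subseteq L\subseteq\widetilde R^{\,N}$ correspond to lattices $\mathfrak c^N\subseteq L'\subseteq R^N$ with $\dim(L/R^N)=\dim(R^N/L')$, where $\mathfrak c=t^{f+1}\widetilde R$ is the conductor. The same surjection argument then shows that the limit of the sandwiched count is the Cohen--Lenstra mass of $R/\mathfrak c$. Hence your plan needs, as a separate theorem,
\[
\sum_{M}\frac{1}{\abs{\mathrm{Aut}_R M}}=\prod_{j\ge1}\frac{1}{1-q^{-j}}\sum_{M:\ \mathfrak cM=0}\frac{1}{\abs{\mathrm{Aut}_R M}},
\]
that is, a splitting of the mass of $R$ into that of the normalization $\Fq[[t]]$ and that of $R/\mathfrak c$.

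This is not a bookkeeping correction. For $(a,b)=(2,3)$ one has $R/\mathfrak c=\Fq$ and $\sum_n\abs{\mathrm{GL}_n(\Fq)}^{-1}=\sum_n q^{-n^2}/(q^{-1};q^{-1})_n=Z_{2,3}(q^{-1})$. So in that case the display is literally the whole theorem, and the cell count contributes nothing. Nothing in Steps 1--3 addresses this splitting; the cell decomposition only evaluates the right-hand factor.
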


\medskip

It is natural to ask what the series $Z_{a,b}(q)$ evaluates to. HJO conjectured that $Z_{a,b}(q)$ is an infinite product, giving a striking bi-infinite family of Rogers--Ramanujan type identities.

To state the conjecture, we begin by defining the infinite product. For coprime $a,b> 1$, define the \defn{$a,b$-charge} at $i\in \Z$ as
$$r_{a,b}(i):=\min\{a,b,\mathrm{dist}(ai,(a+b)\Z)\}-1+1_{(a+b)\Z}(i),$$
where $\mathrm{dist}(ai,(a+b)\Z)$ is the distance from $ai$ to the closest multiple of $a+b$. Define
\[ P_{a,b}(q):=\prod_{i\geq 1}(1-q^i)^{-r_{a,b}(i)},\]
which converges for $\abs{q}<1$. It is worth noting that $r_{a,b}(i)=r_{a,b}(a+b\pm i)\geq 0$ and $\sum_{i=1}^{a+b}r_{a,b}(i)=(a-1)(b-1)$. Thus, $P_{a,b}(q)^{-1}$ is a product of $(a-1)(b-1)$ modular units of the form $\vartheta_{i;a+b}(q)=(q^i,q^{a+b-i};q^{a+b})_\infty$.



\begin{conjecture}
    [The Huang-Jiang-Oblomkov conjecture \cite{HJO}]
     Let $a,b> 1$ be coprime and assume the notation above. Then for $\abs{q}<1$, $Z_{a,b}(q)$ converges and
     \[ Z_{a,b}(q)=P_{a,b}(q).\] \label{conj:HJO}
\end{conjecture}
For $a=2$ and $b=2k+1$, the conjecture states that
\[ \sum_{r_1\geq \dots\geq r_k\geq 0} \frac{q^{r_1^2+\dots+r_k^2}}{(q)_{r_1-r_2}\dots (q)_{r_{k-1}-r_k}(q)_{r_k}} = \frac{(q^{k+1},q^{k+2},q^{2k+3};q^{2k+3})_\infty}{(q)_\infty},\]
where we have renamed the variables in $Z_{a,b}(q)$ by $r_i=n_{2k+1-2i}$. By the Andrews--Gordon identity, the HJO conjecture is known to hold for $a=2$. For all other cases, the HJO conjecture was open.

Our main result proves the next four cases of the HJO conjecture.

\begin{theorem}\label{thm:main}
    The HJO conjecture holds for $a=3$ and $b=4,5,7,8$.
\end{theorem}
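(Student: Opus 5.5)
The plan is to reduce the rank-$(a-1)(b-1)/2$ sum $Z_{3,b}(q)$ to a sum of much lower rank that can be matched with known Rogers--Ramanujan type identities, as the abstract suggests, by way of a finer sum-to-sum identity. For $a=3$ the gap set $G$ splits, through the Sylvester coordinates, into two rows: $x=1$ and $x=2$. Along each row the poset $\preceq$ is a chain. The cone $C_\mathbb{R}$ therefore consists of two interlaced decreasing chains $(n_i)_{x(i)=1}$ and $(n_i)_{x(i)=2}$, together with cross inequalities coming from the difference $a=3$.

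\textbf{Step 1.} Write $Q(\mathbf{n})$ explicitly for $a=3$. Since $U(x)$ is supported on $0\le x<a+b$ with values $1,0,-1$, the form $Q$ only couples gaps within distance $b+3$. Splitting by rows, $Q$ should become a sum of squares of consecutive differences within each row, plus a bilinear coupling term between the rows.

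\textbf{Step 2.} Fix the second row and sum over the first. The summand's Pochhammer factors $(q)_{n_i-n_{i-3-b}}/((q)_{n_i-n_{i-3}}(q)_{n_i-n_{i-b}})$ have $q$-binomial shape. Grouping consecutive gaps, I would repeatedly apply $q$-Vandermonde (the $q$-Chu--Vandermonde summation) to eliminate one row variable at a time. The target is a conjectural sum-to-sum identity expressing $Z_{3,b}(q)$ as a multisum of roughly half the rank, of Andrews--Gordon / $\mathrm{A}_2$ type (Andrews--Schilling--Warnaar or Warnaar's $\mathrm{A}_2$ Andrews--Gordon).

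\textbf{Step 3.} For the specific cases $b=4,5,7,8$, the ranks are $3,4,6,7$ and the modulus $a+b$ is $7,8,10,11$. I would first test the reduced sum against known product identities: the $\mathrm{A}_2$ Rogers--Ramanujan identities of modulus $7$ and $8$, and Warnaar's $\mathrm{A}_2$ Andrews--Gordon identities for $10$ and $11$. Where such an identity is available, the product side $P_{3,b}(q)$ follows. Where it is not, I would prove the finite sum-to-sum identity directly by symbolic summation. This means creative telescoping or a holonomic recurrence (Zeilberger-style, via computer algebra) on finitizations in which the top variable is bounded by $N$, checking that both sides satisfy the same recurrence in $N$ with the same initial values, and then letting $N\to\infty$. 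Convergence is controlled by the positive-definiteness bound from \cite{Huang2026}.

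\textbf{Main obstacle.} I expect the hardest step to be Step 2: finding the right finitization, and the right order of Vandermonde eliminations, so that the rank-$|G|$ sum collapses cleanly. The cross-row inequalities make the summation ranges awkward. I would first try eliminating the row-$x=2$ variables, since the conditions $n_i\ge n_{i-3}$ look most like $q$-binomial constraints. As a consistency check, I would verify the resulting identity at $q=1$, where it becomes a binomial convolution identity.
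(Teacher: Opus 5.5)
Your outline has the paper's architecture: a finer sum-to-sum identity, $q$-Vandermonde reductions, Warnaar's $\mathrm{A}_2$ Andrews--Gordon identities for the product side, and computer algebra where Vandermonde fails. There is a genuine gap, though. You never identify the finite identity that carries the proof, and the decomposition you propose is not the one that works.

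\textbf{The missing sum-to-sum identity.} The paper freezes the top $k$ entries of the chain through $f=2b-3$, namely $r_j=n_{2b-3j}$ for $j=1,\dots,k$, with $k$ the integer nearest $b/3$. It then sums over all other variables: the rest of that chain \emph{and} the whole other chain. (For $a=3$ the two chains are the rows $y=1$ and $y=2$, i.e.\ $\{2b-3x\}$ and $\{b-3x\}$; it is $y$, not $x$, that takes only two values.)
\begin{itemize}
\item The weight then factors as $\frac{1}{(q)_{r_1}}\prod_{i<k}\qbinom{r_i}{r_{i+1}}$ times $q$-binomials in the summed variables, and the inner sum $S(\mathbf r)$ is finite.
\item One must show $S(\mathbf r)=\sum_{\mathbf m}q^{\sum_i(r_i^2-r_im_i+m_i^2)}\qbinom{2r_k}{m_k}\prod_{i<k}\qbinom{r_i-r_{i+1}+m_{i+1}}{m_i}$ for $b=3k+1$. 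For $b=3k-1$, $\qbinom{r_{k-1}+r_k}{m_{k-1}}$ replaces $\qbinom{2r_k}{m_k}$ and an extra factor $q^{r_k^2}$ appears.
\item Multiplied by the weight, this is exactly the summand of Warnaar's (1.10)/(1.11), which covers all four moduli $7,8,10,11$. So no case lacks a product identity, and symbolic summation of a finite identity could not supply one anyway.
\end{itemize}

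\textbf{Why fixing a whole chain fails.} This does not work in either direction. If you fix the short chain, $n_f$ is unbounded, so the inner sum is an infinite series. If you fix the long chain, no merge is possible, because every Vandermonde merge in the paper combines variables from \emph{both} chains:
\begin{itemize}
\item for $b=4,5$ the merge is $m_1=n_1+n_2$;
\item for $b=7$ one first splits $n_4=a+c$ by Vandermonde and applies $q$-absorption, then merges $m_1=n_1+n_2+c$ and $m_2=a+n_5$.
\end{itemize}
What you would have to supply is that, with the right block ordering, $Q$ minus the Vandermonde exponent depends only on the merged variables.

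\textbf{The $b=8$ fallback.} By Remark~\ref{rmk:fail}, no block ordering gives the right power of $q$. The identity $S_4(r_1,r_2,r_3)=S_2(r_1,r_2,r_3)$ has three free parameters, so one recurrence in a single bound $N$ cannot prove it. The paper's argument is genuinely multivariate:
\begin{itemize}
\item A recurrence \texttt{Prec} lowering $r_3$ is found for $S_2$. It is proved for the fourfold sum $S_4$ only by reducing it to $0$ modulo a Gr\"obner basis of the left ideal generated by two other annihilators $P_2,P_3$ of $S_4$; qMultiSum could not find it for $S_4$ directly.
\item The common recurrence \texttt{P2rec} then reduces $r_3=0$ to the diagonals $R_2=R_1$ and $R_2=R_1-1$.
\item On those diagonals, coinciding order-$3$ recurrences plus three initial values each finish the proof.
\end{itemize}
If you instead finitize only by $n_f=N$, you must prove the coarser identity whose left side is a sixfold sum. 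That is a much harder creative-telescoping problem, and you give no indication it is feasible.
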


\begin{remark}\label{rmk:lauono}
After the present work was completed, \lau and \ono \cite{lauono}
proved the HJO conjecture for $a=3$ and \emph{every} $b>3$ coprime
to $3$, by a different argument.
Their proof proceeds by verifying Conjecture~\ref{conj:sum}. Theorem~\ref{thm:main} is therefore subsumed by their
result. We retain the proofs presented here because they are of
independent interest: those for $b=4,5,7$ are short and self-contained,
using nothing beyond the $q$-Vandermonde and $q$-absorption identities,
while the proof for $b=8$ illustrates a $q$-holonomic method that
applies precisely when no such elementary route is available (see
Remark~\ref{rmk:fail}).
\end{remark}

Our proof naturally motivates a new family of conjectured sum-to-sum identities below.
\begin{conjecture}[Sum-to-sum conjecture]\label{conj:sum}
For $b>3$ coprime to $a=3$, let $f=\max G=2b-3$, and $k$ be the unique integer closest to $b/3$. Let $Q_{3,b}(\mathbf{n})$ be the quadratic form \eqref{def:quadratic-form}. For any fixed $r_1 \geq  r_2 \geq \dots \geq r_k \geq 0$, we have the following, where we define $n_i=0$ for $i<0$.
\begin{itemize}
    \item If $b=3k+1$, then 
    \begin{equation}\label{eq:sum-3k+1}
        \begin{multlined}
            \sum_{\mathbf{n}} q^{Q_{3,b}(\mathbf{n})}
            \left( \prod_{j=1}^{k} \qbinom{n_{3j+2}}{n_{3j-1}} \right)\prod_{j=1}^{k} \qbinom{r_{k-j+1}-n_{3j-5}}{n_{3j-2}-n_{3j-5}} \\
            = \sum_{m_1,\dots,m_k} q^{\sum_{i=1}^k (r_i^2 - r_i m_i + m_i^2)}
            \qbinom{2r_k}{m_k} \prod_{i=1}^{k-1} \qbinom{r_i - r_{i+1} + m_{i+1}}{m_i},
        \end{multlined}
    \end{equation}
    where the sum is over $\mathbf{n}\in \Z^G$ subject to $n_{f+3-3j}=r_j$ for $j=1,\dots,k$.

    \item If $b=3k-1$, then
    \begin{equation}\label{eq:sum-3k-1}
        \begin{multlined}
            \sum_{\mathbf{n}} q^{Q_{3,b}(\mathbf{n})}
            \left( \prod_{j=1}^{k-1} \qbinom{n_{3j+1}}{n_{3j-2}} \right)\prod_{j=1}^{k-1} \qbinom{r_{k-j}-n_{3j-4}}{n_{3j-1}-n_{3j-4}} \\
            = \sum_{m_1,\dots,m_{k-1}} q^{r_k^2+\sum_{i=1}^{k-1} (r_i^2 - r_i m_i + m_i^2)}
            \qbinom{r_{k-1} + r_k}{m_{k-1}} \prod_{i=1}^{k-2} \qbinom{r_i - r_{i+1} + m_{i+1}}{m_i},
        \end{multlined}
    \end{equation}
    where the sum is over $\mathbf{n}\in \Z^G$ subject to $n_{f+3-3j}=r_j$ for $j=1,\dots,k$.
\end{itemize}
\end{conjecture}
By identities of Warnaar \cite[(1.10),(1.11)]{WarnaarAG}, a straightforward argument (see Section~\ref{sec:sum-implies-hjo}) shows that Conjecture~\ref{conj:sum} for $b$ implies Conjecture~\ref{conj:HJO} for $(3,b)$. In fact, Conjecture~\ref{conj:sum} is finer than the $(3,b)$ case of Conjecture~\ref{conj:HJO}; the latter is proved by summing the former over $r_1,\dots,r_k$. 

\begin{theorem}\label{thm:sum}
    Conjecture~\ref{conj:sum} holds for $b=4,5,7,8$.
\end{theorem}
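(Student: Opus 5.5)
We prove Theorem~\ref{thm:sum} one value of $b$ at a time, in each case first making the quadratic form $Q_{3,b}$ and the gap set $G$ fully explicit and then collapsing the left-hand side coordinate by coordinate. For $b=4$ we have $G=\{1,2,5\}$ and $k=1$, with $n_5=r_1$ fixed. For $b=5$ we have $G=\{1,2,4,7\}$ and $k=2$, with $n_7=r_1$ and $n_4=r_2$. For $b=7$ and $b=8$ the gaps are $\{1,2,4,5,8,11\}$ and $\{1,2,4,5,7,10,13\}$ respectively. In every case the fixed coordinates are exactly the gaps $f+3-3j$, which form the top of each chain $i\preceq i+3$. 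The free coordinates are the remaining gaps, namely those in the other residue class mod~$3$ together with the small ones. Writing out $U(j-i)$ expresses $Q$ as a sum of squares $n_i^2$ plus cross terms $n_in_j$ with coefficients $\pm1$ (or $2$ on the diagonal), according to which of the thresholds $0,3,b,b+3$ the difference $j-i$ crosses.

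The key step is to sum out the free variables one at a time, starting from the smallest gaps $1,2$. After completing the square in a single free variable, the sum over it has the shape
\[
\sum_{m} q^{m^2-(\text{linear})m}\qbinom{A}{m}\qbinom{B}{C-m}
\quad\text{or}\quad
\sum_m q^{(\ldots)}\qbinom{A}{m}\qbinom{m}{C},
\]
coming from the factors $\qbinom{n_{3j+2}}{n_{3j-1}}$ and $\qbinom{r-n_{3j-5}}{n_{3j-2}-n_{3j-5}}$. The first shape is evaluated by the $q$-Vandermonde identity
\[
\sum_m q^{m(B-C+m)}\qbinom{A}{m}\qbinom{B}{C-m}=\qbinom{A+B}{C}.
\]
The second is handled by the absorption identity $\qbinom{A}{m}\qbinom{m}{C}=\qbinom{A}{C}\qbinom{A-C}{m-C}$, which is then followed by another Vandermonde summation. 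After each such step we rename the surviving free variables so that they match the $m_i$ on the right-hand side, and we check that the leftover exponent equals $\sum_i (r_i^2-r_im_i+m_i^2)$ (plus $r_k^2$ in the case $b=3k-1$).

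For $b=4$ there is one free summation beyond $m_1$, and the target $\sum_{m} q^{r^2-rm+m^2}\qbinom{2r}{m}$ should come out of a single Vandermonde step. For $b=5$ and $b=7$ I expect two or three such steps to suffice, with the order of summation chosen so that at every stage the exponent is quadratic in only one variable with leading coefficient matching Vandermonde. Finding that order is routine but fiddly.

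The main obstacle is $b=8$. There the collapsed left side retains a double sum whose exponent couples two free variables in a way that no single Vandermonde or absorption step removes. For this case I would switch to a $q$-holonomic approach. View both sides as functions $L(r_1,r_2,r_3)$ and $R(r_1,r_2,r_3)$. Use creative telescoping (a $q$-Zeilberger or Koutschan-type package) to compute, for each side, a system of linear $q$-recurrences in the $r_i$ with polynomial coefficients. Then verify that the two annihilating ideals agree, or that $L$ satisfies the recurrences computed for $R$. Finally, check enough initial values, for small $r_i$ subject to $r_1\ge r_2\ge r_3\ge0$, to pin down the solution uniquely, including on the boundary where leading coefficients vanish.

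The delicate part of that final step is certifying that the recurrences have no singularities on the relevant lattice region, so that the finitely many initial checks really do suffice. I would try first to reduce the problem to a single free parameter by first applying the elementary Vandermonde steps wherever they still work. This should leave a one-variable recurrence in, say, $r_3$, which is far easier to certify.
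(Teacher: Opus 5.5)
Your overall plan is the paper's: $q$-Vandermonde with a well-chosen block ordering plus $q$-absorption for $b=4,5,7$, and a computer-assisted $q$-holonomic argument for $b=8$. For $b=4,5$ it works exactly as you say. A single two-block Vandermonde along $n_1+n_2=m_1$, with block ordering $(r_1,n_1),(r_1,n_2)$, resp.\ $(r_2,n_1),(r_1,n_2)$, leaves an exponent depending only on $m_1$.

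The gap is $b=7$. There the moves you list (summing out along lines, and absorption applied to $\qbinom{n_8}{n_5}\qbinom{n_5}{n_2}$) do not close. Here is one attempt that fails:
\begin{itemize}
\item Absorb that pair, then collapse $\qbinom{r_2-n_2}{n_5-n_2}\qbinom{r_1-n_1}{n_4-n_1}$ along $M=(n_5-n_2)+(n_4-n_1)$, then $\qbinom{r_2}{n_1}\qbinom{r_2}{n_2}$ along $n_1+n_2$. Use the block orderings that make each residual exponent independent of the summed-out variables.
\item This does produce $\qbinom{2r_2}{m_2}\qbinom{r_1-r_2+m_2}{m_1}$ with $m_1=M$ and $m_2=2r_2-n_1-n_2$.
\item But the quadratic exponent is wrong: the term $n_1+n_2=M=0$ carries $q^{r_1^2+r_2^2}$, while the target term $(m_1,m_2)=(0,2r_2)$ carries $q^{r_1^2+3r_2^2}$. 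You are left with a new double-sum identity to prove.
\end{itemize}
Collapsing $\qbinom{n_5}{n_2}\qbinom{r_1-n_1}{n_4-n_1}$ first instead gives $\qbinom{r_1-n_1+n_5}{m}$, which no longer pairs with $\qbinom{r_2}{n_1}\qbinom{r_2}{n_5}$.

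The missing idea is to run Vandermonde backwards first:
\begin{itemize}
\item Split $\qbinom{r_1-n_1}{n_4-n_1}$ along $r_1-n_1=(r_1-r_2)+(r_2-n_1)$, writing $n_4=a+c$, with block ordering $(r_1-r_2,c),(r_2-n_1,a-n_1)$.
\item Only now does absorption apply, in the direction $\qbinom{r_2}{n_1}\qbinom{r_2-n_1}{a-n_1}=\qbinom{r_2}{a}\qbinom{a}{n_1}$.
\item A three-block Vandermonde with ordering $(a,n_1),(n_5,n_2),(r_1-r_2,c)$ then gives $\qbinom{r_1-r_2+m_2}{m_1}$, where $m_1=n_1+n_2+c$ and $m_2=a+n_5$.
\item A final two-block Vandermonde with ordering $(r_2,a),(r_2,n_5)$ gives $\qbinom{2r_2}{m_2}$.
\end{itemize}
Each step leaves an exponent free of the eliminated variables. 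Your $b=8$ machinery could presumably settle $b=7$ as well, but the elementary route you sketch is incomplete as stated.

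For $b=8$ your outline agrees with the paper, and the singularity worry is moot. Your proposed preprocessing is off target, though: Vandermonde steps remove summation variables, not the parameters $r_i$. The paper reduces the parameters by recurrences:
\begin{itemize}
\item A recurrence \texttt{Prec} computed for the double sum $S_2$ expresses $S(r_1,r_2,r_3+1)$ through level-$r_3$ values, with a $q$-monomial coefficient. This reduces everything to $r_3=0$, where $\qbinom{0}{n_4}$ forces $n_4=n_1=0$.
\item A recurrence \texttt{P2rec}, found by creative telescoping for both $S_2$ and $S_4$, reduces the slice $r_3=0$ to the lines $R_2=R_1$ and $R_2=R_1-1$.
\item On each line both sides satisfy the same order-$3$ recurrence with leading coefficient $-1$, so three initial values suffice.
\end{itemize}
The genuinely non-routine point, which your sketch does not anticipate, is that creative telescoping did not produce \texttt{Prec} for the fourfold sum $S_4$. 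The paper instead shows $S_4$ satisfies it by reducing its operator to $0$ modulo a noncommutative Gr\"obner basis of the left ideal generated by two other $S_4$-annihilators $P_2,P_3$. Explicit cofactors make this an independently checkable certificate.

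One small slip: $Q$ has diagonal coefficient $1$, not $2$, since $U(0)=1$ and $U$ vanishes on negative arguments.
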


Theorem~\ref{thm:main} follows at once: combine Theorem~\ref{thm:sum}
with the implication recorded above, which is proved in
Section~\ref{sec:sum-implies-hjo}.

For general $b$, we are able to prove the $q=1$ specialization.
\begin{theorem}\label{thm:q=1}
    Conjecture~\ref{conj:sum} holds if $q=1$.
\end{theorem}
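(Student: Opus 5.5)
At $q=1$ every Gaussian binomial becomes an ordinary binomial coefficient and every power of $q$ becomes $1$. Conjecture~\ref{conj:sum} at $q=1$ is therefore a purely combinatorial identity between two finite sums of products of binomial coefficients. I will treat it as a counting statement and prove it by evaluating both sides in closed form as functions of $r_1\ge\dots\ge r_k\ge 0$.

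\textbf{Step 1: identify the free variables on the left.} The fixed variables are $n_{f+3-3j}=r_j$. I first determine which $n_i$ actually occur in the left-hand summand. In the case $b=3k+1$ these are the residues $\equiv 2$ and $\equiv 1 \pmod 3$ along the chains $n_{3j-1}$ and $n_{3j-2}$, together with the constraints from the gaps. Using the Sylvester coordinates, the gaps of $\langle 3,b\rangle$ split into two columns ($x=1,2$). The $x=1$ column is exactly the indices $f+3-3j$ carrying the $r_j$. The other column carries the free variables. I expect the left-hand side to factor as an iterated sum along that second column, of the shape
\[
\sum \prod_j \binom{n_{3j+2}}{n_{3j-1}}\binom{r_{k-j+1}-n_{3j-5}}{n_{3j-2}-n_{3j-5}} .
\]
Any gap index not appearing in the binomials must have a forced value, or else be absent, in the $q=1$ evaluation. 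I would check this first against the definition \eqref{eq:inf-sum-def}.

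\textbf{Step 2: collapse the left side by Vandermonde.} The variables $n_{3j-2}$ each appear in only one binomial, paired with a neighbour. At $q=1$ I can sum them out telescopically, alternating between Vandermonde $\sum_m\binom{A}{m}\binom{B}{N-m}=\binom{A+B}{N}$ and the absorption/upper-summation identity $\sum_{m}\binom{m}{c}=\binom{M+1}{c+1}$. The goal is a closed product, which I expect to be something like $\prod_i\binom{r_i+r_{i+1}}{\,\cdot\,}$ or simply $\prod_i \binom{2r_i}{r_i}$-type data.

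\textbf{Step 3: collapse the right side and compare.} On the right, I sum $m_1$, then $m_2$, and so on, each time with Vandermonde: $\sum_{m_i}\binom{r_i-r_{i+1}+m_{i+1}}{m_i}$ is a free sum giving a power of $2$ if $m_i$ is otherwise unconstrained. Since at $q=1$ the exponent is gone, the sum is $\sum_{m_i}\binom{N}{m_i}=2^{N}$. This gives
\[
\sum_{m_k}\binom{2r_k}{m_k}\,2^{\,r_{k-1}-r_k+m_k}\cdots ,
\]
which telescopes to a product of powers of $2$ and $3$. For example, the $k=1$ case gives $4^{r_1}$, and in general I get $2^{r_1-r_2}\cdots$ combined with $\sum_m\binom{2r_k}{m}2^m=3^{2r_k}$. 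I then carry out the analogous collapse on the left, performing the free summations innermost-first so that each produces a geometric factor. The main check is that the two closed forms coincide; for $b=3k-1$ the same argument runs with $\binom{r_{k-1}+r_k}{m_{k-1}}$ giving $3^{r_{k-1}+r_k}$.

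\textbf{Main obstacle.} The hard part is Step 1: correctly extracting, from the quadratic form and the cone conditions of $Z_{3,b}$, the exact index structure of the left-hand sum, including which variables are summed and over which ranges. After that, the left sum is a chain of nested binomial sums, and the key is choosing the summation order so that each inner sum is a plain row sum or a Vandermonde convolution. If a direct order fails, I would instead interpret both sides as counting words or lattice paths of length determined by the $r_i$, with letters from a $3$-letter alphabet, and give a bijection.
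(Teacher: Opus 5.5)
Your idea of evaluating both sides in closed form at $q=1$ is sound, and it differs from the paper's route. As written, though, the execution has a real gap. The closed form you assert for the right-hand side is wrong for general $k$. The left-hand evaluation, which is the actual content of the proof, is never carried out. The tools you plan for it (Vandermonde, upper summation, binomial-type products) are not what is needed.

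On the right, only the $m_1$-sum is an unweighted row sum. It gives $2^{r_1-r_2+m_2}$, so the $m_2$-sum carries the weight $2^{m_2}$ and gives $3^{r_2-r_3+m_3}$. The $m_3$-sum then gives $4^{r_3-r_4+m_4}$, and so on: the bases grow. The correct values are
\[
\prod_{i=1}^{k-1}(i+1)^{r_i-r_{i+1}}\cdot (k+1)^{2r_k}\quad (b=3k+1),
\qquad
\prod_{i=1}^{k-2}(i+1)^{r_i-r_{i+1}}\cdot k^{r_{k-1}+r_k}\quad (b=3k-1).
\]
So your ``powers of $2$ and $3$'', ``$3^{2r_k}$'' and ``$3^{r_{k-1}+r_k}$'' are right only for $b=4,7,8$. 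Already $b=5$ gives $2^{r_1+r_2}$, and $b=10$ gives $2^{r_1-r_2}3^{r_2-r_3}4^{2r_3}$.

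On the left, the summand is a product of two chains in disjoint variables, and each collapses by the binomial theorem alone. Take $b=3k+1$.
\begin{itemize}
\item The chain $\prod_j\binom{n_{3j+2}}{n_{3j-1}}$ (with $n_{3k+2}=r_k$), summed from $n_2$ upward, gives $2^{n_5}$, then $3^{n_8}$, \dots, and finally $(k+1)^{r_k}$.
\item The chain $\prod_j\binom{r_{k-j+1}-n_{3j-5}}{n_{3j-2}-n_{3j-5}}$, summed from $n_{3k-2}$ downward, gives $2^{r_1-n_{3k-5}}$, then $2^{r_1-r_2}3^{r_2-n_{3k-8}}$, \dots, and finally $\prod_{i=1}^{k}(i+1)^{r_i-r_{i+1}}$ with $r_{k+1}=0$.
\end{itemize}
The product of the two is the right-hand side. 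For $b=3k-1$, the first chain gives $k^{r_k}$ and the second gives the same product with $r_k$ replaced by $0$, again matching.

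Your Step 1 is not an obstacle. The summand is given explicitly, and every free variable $n_{3j-1},n_{3j-2}$ occurs in a binomial, so the sum is finite. Incidentally, the fixed indices $f+3-3j$ are not a whole Sylvester column; they are the entries $x=1,\dots,k$ of the row $y=1$.

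With these corrections your proof is complete and shorter than the paper's. The paper instead proves a flag-straightening lemma. It gives an explicit bijection between pairs of flags $(X_\bullet,Y_\bullet)$ with $X_j\subseteq R_{\ell+1-j}$ and $Y_\ell\subseteq T$, and tuples $(M_\bullet)$ with $M_p\subseteq M_{p+1}\sqcup D_p$ and $M_\ell\subseteq T\sqcup D_\ell$, where $D_i=R_i\setminus R_{i+1}$. In both models each element of $D_i$ independently has $i+1$ states and each element of $T$ has $\ell+1$ states. That is exactly your product formula, so your count is the cardinality shadow of that bijection.

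Your route needs nothing but the binomial theorem. The paper's bijection identifies the two sets themselves, and its algebraic form (Vandermonde plus absorption) is what the paper $q$-deforms for $b=4,5,7$. Your fallback of words over a $3$-letter alphabet would have to use these block-dependent alphabet sizes instead, at which point it is essentially the paper's lemma.
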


Although Theorem~\ref{thm:q=1} is subsumed by the result of \cite{lauono} recorded in Remark~\ref{rmk:lauono}, we include its proof because it is a direct bijection between flags of nested sets and uses no $q$-series machinery at all.

\section*{Acknowledgements} The authors thank George Andrews and Seewoo Lee for inspiring and helpful discussion regarding the contents of this paper.

\section{Notation and nuts and bolts}
\subsection*{$q$-Pochhammer symbols}
Throughout we assume $|q|<1$ so that infinite products of the form $(a;q)_\infty$ converge.
For $n\in\mathbb{Z}_{\ge 0}$ and complex $a$, define the \emph{$q$-Pochhammer symbol}
\[
\qPochN{a}{q}{n}:=\prod_{j=0}^{n-1}\bigl(1-aq^j\bigr),
\qquad
\qPochN{a}{q}{0}:=1,
\]
and the infinite product
\[
\qPochN{a}{q}{\infty}:=\prod_{j=0}^{\infty}\bigl(1-aq^j\bigr).
\]
We also use the common shorthand
\[
(q)_n := \qPochN{q}{q}{n}
\qquad {\textrm{and}} \qquad
(q)_\infty := \qPochN{q}{q}{\infty}.
\]
Standard references for this notation and basic identities include \cite[Chapter~1]{GasperRahman}.

\subsection*{Gaussian $q$-binomial coefficients}
For integers $N\ge 0$ and $k\in\mathbb{Z}$, define the \emph{$q$-binomial coefficient} by
\[
\qbinom{N}{k}
:=
\begin{cases}
\dfrac{(q)_N}{(q)_k\,(q)_{N-k}}, & 0\le k\le N,\\[1.0em]
0, & \text{otherwise}.
\end{cases}
\]
(See e.g.\ \cite[\S1.3]{GasperRahman}.) In particular, for $0\le k\le N$,
\[
\frac{1}{(q)_k\,(q)_{N-k}} = \frac{\qbinom{N}{k}}{(q)_N}.
\]

\section{The $q$-Vandermonde Identity}
The following multi-part $q$-Vandermonde identity is standard.  
\begin{lemma}[$k$-part $q$-Vandermonde Identity]\label{lem:q-vander-k}
Let $N = \sum_{i=1}^k N_i$. Then
\begin{equation}\label{eq:q-vander-k}
    \qbinom{N}{R} = \sum_{r_1 + \dots + r_k = R} q^{\sum_{1\le i < j \le k} (N_i - r_i)r_j} \prod_{i=1}^k \qbinom{N_i}{r_i}.
\end{equation}
\end{lemma}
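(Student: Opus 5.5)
We argue by induction on the number of parts $k$, taking as the base case the classical two-part $q$-Vandermonde identity
\[
\qbinom{M+N}{R}=\sum_{r_1+r_2=R} q^{(M-r_1)r_2}\qbinom{M}{r_1}\qbinom{N}{r_2}.
\]
Its cleanest proof is by counting. Work over $\Fq$ with $V=\Fq^{M}\oplus\Fq^{N}$ and $W=\Fq^M$, so that $\qbinom{M+N}{R}$ counts $R$-dimensional subspaces $U\subseteq V$. To each such $U$ we attach $r_1=\dim(U\cap W)$ and $r_2=R-r_1=\dim$ of the image of $U$ in $V/W\cong\Fq^N$. There are $\qbinom{M}{r_1}\qbinom{N}{r_2}$ choices for the pair (intersection, image). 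The subspaces $U$ with a prescribed pair $(U\cap W,\bar U)$ correspond to linear maps $\bar U\to W/(U\cap W)$, and there are $q^{r_2(M-r_1)}$ of these. This proves the identity for prime powers $q$. Both sides are polynomials in $q$, so it holds identically. If one prefers to stay within $q$-series, one can instead induct on $N$ using the Pascal recurrence $\qbinom{N}{k}=\qbinom{N-1}{k-1}+q^{k}\qbinom{N-1}{k}$; alternatively, one extracts coefficients of $z^R$ from $(z;q)_{M+N}=(z;q)_M\,(zq^M;q)_N$ via the $q$-binomial theorem $(z;q)_N=\sum_k \qbinom{N}{k}(-1)^kq^{\binom k2}z^k$.

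For the inductive step, set $N'=N_1+\dots+N_{k-1}$ and apply the two-part identity to $N=N'+N_k$:
\[
\qbinom{N}{R}=\sum_{R'+r_k=R}q^{(N'-R')r_k}\qbinom{N'}{R'}\qbinom{N_k}{r_k}.
\]
Next we expand $\qbinom{N'}{R'}$ by the $(k-1)$-part identity, writing $R'=r_1+\dots+r_{k-1}$. This leaves only exponent bookkeeping. Since $N'-R'=\sum_{i<k}(N_i-r_i)$, we get
\[
(N'-R')r_k=\sum_{i<k}(N_i-r_i)r_k,
\]
and these are exactly the terms with $j=k$ in $\sum_{i<j}(N_i-r_i)r_j$. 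The terms with $j<k$ come from the inductive hypothesis. Combining the two sums over $R'$ and over $(r_1,\dots,r_{k-1})$ gives a sum over $r_1+\dots+r_k=R$, which is \eqref{eq:q-vander-k}.

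There is no real obstacle here. The only points requiring care are the convention that $\qbinom{N_i}{r_i}=0$ outside $0\le r_i\le N_i$, which lets the sum range over all integer tuples, and the check that the ordered exponent $(N_i-r_i)r_j$ with $i<j$ matches the order in which the parts are split off.
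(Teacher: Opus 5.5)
Your proof is correct. The finite-field count gives the two-part identity with exactly the exponent $(M-r_1)r_2$. In the inductive step, peeling off the last block $(N_k,r_k)$ contributes $(N'-R')r_k=\sum_{i<k}(N_i-r_i)r_k$, which are precisely the $j=k$ cross terms, so the induction closes.

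The paper gives no proof of this lemma and simply calls the identity standard, so there is no route to compare against. Your argument supplies the usual derivation. The other block orderings used later (Remark~\ref{rmk:ordering}) follow from it by relabeling the pairs $(N_i,r_i)$.
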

\begin{remark}
    \label{rmk:ordering}
    By permuting the pairs $(N_i,r_i)$, Lemma~\ref{lem:q-vander-k} produces $k!$ identities with distinct powers of $q$. As such, to specify how one applies Lemma~\ref{lem:q-vander-k}, we say \eqref{eq:q-vander-k} is the $q$-Vandermonde Identity with \defn{block ordering} $(N_1,r_1),\dots,(N_k,r_k)$. For example, the $q$-Vandermonde Identity with block ordering $(A,k),(B,r-k)$ is
    \begin{equation}\label{eq:q-vander-2}
    \qbinom{A+B}{r} = \sum_{k=0}^r q^{(A-k)(r-k)} \qbinom{A}{k} \qbinom{B}{r-k},
    \end{equation}
    while the $q$-Vandermonde Identity with block ordering $(B,r-k),(A,k)$ is
    \begin{equation}
    \qbinom{A+B}{r} = \sum_{k=0}^r q^{(B-r+k)k} \qbinom{A}{k} \qbinom{B}{r-k}.
\end{equation}
\end{remark}

\section{The $q=1$ specialization and the proof of Theorem~\ref{thm:q=1}}

The proof of Theorem~\ref{thm:q=1} is combinatorial: both sides, evaluated at $q=1$, count the same set of ``flags of nested sets''. We abstract the key combinatorics in the following lemma to unify the proofs of the case $b=3k+1$ and the case $b=3k-1$. 

\begin{lemma}\label{lem:flag-straightening}
Let $\ell\geq 1$, let $r_1\geq\cdots\geq r_\ell\geq 0$, and let $s\geq 0$.
Put $x_0=0$ and $y_{\ell+1}=s$.  Then
\begin{equation}\label{eq:flag-straightening}
\begin{aligned}
&\sum_{\mathbf{x},\mathbf{y}\in\mathbb Z^{\ell}}
 \prod_{j=1}^{\ell}
 \binom{r_{\ell+1-j}-x_{j-1}}{x_j-x_{j-1}}
 \binom{y_{j+1}}{y_j} \\
&\hspace{35mm}=
\sum_{\mathbf{m}\in\mathbb Z^{\ell}}
 \binom{r_\ell+s}{m_\ell}
 \prod_{i=1}^{\ell-1}
 \binom{r_i-r_{i+1}+m_{i+1}}{m_i}.
\end{aligned}
\end{equation}
\end{lemma}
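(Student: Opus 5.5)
The plan is to evaluate both sides of \eqref{eq:flag-straightening} in closed form, as the same product of powers. I expect both to equal
\[
(\ell+1)^{r_\ell+s}\prod_{i=1}^{\ell-1}(i+1)^{r_i-r_{i+1}} .
\]
This is also a count of the same set of flags, which is the viewpoint announced at the start of this section.

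\emph{The left-hand side.} It factors into an $\mathbf{y}$-part and an $\mathbf{x}$-part, because no binomial involves both.

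For the $\mathbf{y}$-part, read $\binom{y_{j+1}}{y_j}$ as choosing a subset $Y_j\subseteq Y_{j+1}$ of prescribed size, starting from $Y_{\ell+1}=[s]$. Summing over all sizes gives the number of chains $Y_1\subseteq\cdots\subseteq Y_\ell\subseteq[s]$. Each element of $[s]$ is determined by the first level at which it appears, or by never appearing, so this number is $(\ell+1)^s$.

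For the $\mathbf{x}$-part, read $\binom{r_{\ell+1-j}-x_{j-1}}{x_j-x_{j-1}}$ as choosing $X_j\setminus X_{j-1}$ inside $[r_{\ell+1-j}]\setminus X_{j-1}$, with $X_0=\emptyset$. The sum then counts chains $X_1\subseteq\cdots\subseteq X_\ell$ with $X_j\subseteq[r_{\ell+1-j}]$. The containment $X_{j-1}\subseteq[r_{\ell+1-j}]$ is automatic since $r_{\ell+2-j}\le r_{\ell+1-j}$. Terms where some binomial has a negative or out-of-range argument vanish by our convention, so the sum over all of $\mathbb Z^\ell$ is harmless.

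Again record the entry level of each element. Put $r_{\ell+1}=0$ and take an element $e\in(r_{i+1},r_i]$. It may lie in $X_j$ only if $e\le r_{\ell+1-j}$, that is, only if $j\ge \ell+1-i$. So $e$ has $i+1$ options: entry at one of the levels $\ell+1-i,\dots,\ell$, or never entering. Conversely, any assignment of such options produces a valid chain. Hence the $\mathbf{x}$-part equals $\prod_{i=1}^{\ell}(i+1)^{r_i-r_{i+1}}$, and multiplying by the $\mathbf{y}$-part gives the product displayed above.

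\emph{The right-hand side.} Sum out $m_1,m_2,\dots$ in turn using the binomial theorem. By induction on $i$, after summing over $m_1,\dots,m_{i-1}$ the remaining summand contains
\[
\prod_{t<i}(t+1)^{r_t-r_{t+1}}\cdot i^{m_i}\binom{r_i-r_{i+1}+m_{i+1}}{m_i}.
\]
Summing over $m_i$ turns this into $(i+1)^{r_i-r_{i+1}+m_{i+1}}$. The factor $(i+1)^{r_i-r_{i+1}}$ is kept, and $(i+1)^{m_{i+1}}$ feeds the next step. At the last step we get $\sum_{m_\ell}\binom{r_\ell+s}{m_\ell}\ell^{m_\ell}=(\ell+1)^{r_\ell+s}$, which gives the same product.

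For a more bijective flavour, one can read each binomial sum $\sum_m\binom{N}{m}i^m$ as assigning each of $N$ elements either to "not chosen" or to one of $i$ colours. This exhibits both sides as counts of the same colourings. The only point needing care is the index bookkeeping in the $\mathbf{x}$-part, namely matching the level constraint $j\ge\ell+1-i$ with the exponent of $i+1$. I expect that to be the main (mild) obstacle; everything else is the binomial theorem.
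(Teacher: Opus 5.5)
Your proof is correct, but it takes a different route from the paper's. You evaluate both sides in closed form. For the left side you record the level at which each element enters its chain. For the right side you sum out $m_1,\dots,m_\ell$ in turn with the binomial theorem. Both sides then equal $(\ell+1)^{r_\ell+s}\prod_{i=1}^{\ell-1}(i+1)^{r_i-r_{i+1}}$. The bookkeeping is right, including the constraint $j\ge \ell+1-i$ for $e\in(r_{i+1},r_i]$. The terms with $m_{i+1}<0$, where $\binom{r_i-r_{i+1}+m_{i+1}}{m_i}$ may have a negative top, are killed by the next factor, so the binomial theorem is only ever used with nonnegative tops.

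The paper never computes a cardinality. It reads the right side as counting tuples $(M_1,\dots,M_\ell)$ with $M_p\subseteq M_{p+1}\sqcup D_p$ and $M_\ell\subseteq T\sqcup D_\ell$, where $D_i=R_i\setminus R_{i+1}$. It then gives an explicit bijection $M_p=Y_p\sqcup\bigsqcup_{i=p}^{\ell}(X_{\ell+p-i}\cap D_i)$ together with an explicit inverse.

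Your route is shorter and yields a product formula the paper does not record, but that formula is special to $q=1$. With the weights $q^{Q}$ present, the cross terms of $Q$ destroy even your first step, the splitting of the left side into an $\mathbf{x}$-part and a $\mathbf{y}$-part. For example, for $b=4$ the term $n_1n_2$ couples $x_1$ and $y_1$. There is then no product to compare: for $b=4$, $r_1=1$, both sides of \eqref{eq:sum-3k+1} equal $q(2+q+q^2)$.

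The paper's bijection is structural rather than numerical. By Remark~\ref{rmk:fail}, it unwinds into Vandermonde and absorption steps whose $q$-analogues give the proofs for $b=4,5,7$. The breakdown of that unwinding for $b=8$ is what motivates the holonomic method.
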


\begin{proof}
Fix nested sets
\[
 R_\ell\subseteq R_{\ell-1}\subseteq\cdots\subseteq R_1,
 \qquad |R_i|=r_i,
\]
and a further set $T$, disjoint from $R_1$, of size $s$.  The left-hand side
counts tuples of sets $(X_1,\dots,X_\ell,Y_1,\dots,Y_\ell)$ such that
\begin{equation}\label{eq:two-flags}
 X_1\subseteq\cdots\subseteq X_\ell,
 \qquad
 Y_1\subseteq\cdots\subseteq Y_\ell\subseteq T, \qquad X_j\subseteq R_{\ell+1-j} \text{ for } 1\le j\le \ell;
\end{equation}
the summation variables keep track of $|X_j|=x_j$ and $|Y_j|=y_j$. Indeed, we choose the sets in the order: $Y_\ell, \dots, Y_2, Y_1, X_1, X_2,\dots, X_\ell$; after $X_{j-1}$ has
been chosen, the only constraint on the choice of $X_j$ is $X_{j-1}\subseteq X_j\subseteq R_{\ell+1-j}$.

We give a bijection from these tuples of sets to the objects counted by the
right-hand side.  For $1\leq i\leq \ell$, set
\[
 D_i=R_i\setminus R_{i+1}, \qquad R_{\ell+1}\coloneqq \varnothing.
\]
Then the right-hand side of \eqref{eq:flag-straightening} counts tuples of sets $(M_1,\dots,M_\ell)$ such that
\begin{equation}\label{eq:recursive-subsets}
 M_p\subseteq  M_{p+1} \sqcup D_p\text{ for }1\leq p<\ell, M_\ell\subseteq T\sqcup D_\ell.
\end{equation}

To establish the bijection, given $(X_i, Y_i)$, define
\begin{equation}\label{eq:antidiagonal-sets}
 M_p=Y_p\sqcup \bigsqcup_{i=p}^\ell (X_{\ell+p-i}\cap D_i) \subseteq Y_{p+1}\sqcup \bigsqcup_{i=p}^\ell D_i,\qquad 1\leq p\leq \ell,
\end{equation}
where $Y_{\ell+1}\coloneqq T$.

The required inclusions \eqref{eq:recursive-subsets} for $1\leq p<\ell$ follows from the inclusions $Y_p\subseteq Y_{p+1}$ and $X_j\cap D_i\subseteq X_{j+1}\cap D_i$.

Conversely, given $(M_i)_{i=1}^\ell$, since $T\sqcup R_1=T\sqcup \bigsqcup_{i=1}^\ell D_i$ is a disjoint union, we recover
\[ Y_p=M_p\cap T, \qquad X_{\ell+p-i}\cap D_i=M_p\cap D_i,\]
and the latter can be rewritten as
\[ X_j \cap D_i=M_{i+j-\ell}\cap D_i, \qquad \ell+1-j\leq i\leq \ell.\]
Since $X_j\subseteq R_{\ell+1-j}$, $X_j$ is disjoint from $R_1,\dots,D_{\ell-j}$. The above thus determines $X_j$ by
\[ X_j=\bigsqcup_{i=\ell+1-j}^\ell M_{i+j-\ell}\cap D_i.\]
This completes the construction of the inverse map. Indeed, to verify $X_j\subseteq X_{j+1}$ for $1\leq j<\ell$, note by \eqref{eq:recursive-subsets} that
\[ M_p\cap D_i \subseteq M_{p+1}\cap D_i\]
if $i\neq p$, so for $\ell+1-j\leq i\leq \ell$, 
\[ M_{i+j-\ell}\cap D_i \subseteq M_{i+j-\ell+1}\cap D_i\]
because $i+j-\ell<i$. 
\end{proof}

\begin{proof}[Proof of Theorem~\ref{thm:q=1}]
At $q=1$, the powers of $q$ disappear and the Gaussian binomial
coefficients become ordinary binomial coefficients.

Suppose first that $b=3k+1$.  Apply Lemma~\ref{lem:flag-straightening} with
\[
 \ell=k,\qquad s=r_k,\qquad
 x_j=n_{3j-2},\qquad y_j=n_{3j-1}.
\]
Here $x_0=n_{-2}=0$ and $y_{k+1}=n_{3k+2}=r_k$.  Then \eqref{eq:flag-straightening} exactly gives the $q=1$ specialization of \eqref{eq:sum-3k+1}.

If $b=3k-1$, apply the same lemma with
\[
 \ell=k-1,\qquad s=r_k,\qquad
 x_j=n_{3j-1},\qquad y_j=n_{3j-2}.
\]
Now $x_0=n_{-1}=0$ and $y_k=n_{3k-2}=r_k$.  This gives
\eqref{eq:sum-3k-1} at $q=1$, finishing the proofs of both cases.
\end{proof}

\begin{remark}\label{rmk:fail}
The preceding bijection can routinely be written as an algebraic proof with the ordinary Vandermonde identity and the absorption identity
\[
 \binom{N}{A}\binom{A}{B}
 =\binom{N}{B}\binom{N-B}{A-B}.
\]
A $q$-analogue of this proof, using the $q$-Vandermonde identity (Lemma~\ref{lem:q-vander-k}) with suitable block orderings and the $q$-absorption identity
\[
 \qbinom{N}{A}\qbinom{A}{B}
 =\qbinom{N}{B}\qbinom{N-B}{A-B},
\]
yields the proofs for $b=4,5,7$ below. However, this tactic does not work in general: application of $q$-Vandermonde changes the power of $q$ depending on the block ordering, and already for $b=8$ and $b=10$ no ordering produces the correct power required by $Q_{3,b}$.
\end{remark}

\section{Proof of Theorem~\ref{thm:sum} for $b=4,5,7$}
\subsection{Proof of $b=4$}
The $b=4$ special case of Conjecture~\ref{conj:sum} reads:
\begin{lemma}\label{lem:sum-reduction-34}
    For any $n_5=r_1\geq 0$, we have
    \begin{equation}
        \sum_{n_1,n_2} q^{Q(\mathbf{n})} \qbinom{r_1}{n_1}\qbinom{r_1}{n_2} 
        = \sum_{m_1} q^{r_1^2-r_1 m_1+m_1^2}\qbinom{2r_1}{m_1},
    \end{equation}
    where
    \begin{equation*}
        Q(\mathbf{n})=n_1^2+n_2^2+n_5^2+n_1n_2-n_1n_5
    \end{equation*}
\end{lemma}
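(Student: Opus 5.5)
The plan is to collapse the double sum to a single sum over $m=n_1+n_2$ and then recognize the inner sum as a $q$-Vandermonde convolution (Lemma~\ref{lem:q-vander-k} with $k=2$, in the form \eqref{eq:q-vander-2}). This is the $q$-analogue of the bijection in Lemma~\ref{lem:flag-straightening} for $\ell=1$: there one has $M_1=Y_1\sqcup X_1$, so $m_1=n_1+n_2$.

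\emph{Step 1: rewrite the exponent.} Write $r=r_1=n_5$ and set $m=n_1+n_2$. Then
\[
r^2-rm+m^2 = r^2 - rn_1 - rn_2 + n_1^2+2n_1n_2+n_2^2 .
\]
Comparing with $Q(\mathbf n)=n_1^2+n_2^2+r^2+n_1n_2-n_1r$ gives
\[
Q(\mathbf n) = \bigl(r^2-rm+m^2\bigr) + n_2(r-n_1).
\]
Hence the left-hand side equals
\[
\sum_{m} q^{r^2-rm+m^2}\sum_{n_1+n_2=m} q^{(r-n_1)n_2}\qbinom{r}{n_1}\qbinom{r}{n_2}.
\]

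\emph{Step 2: apply $q$-Vandermonde.} Use Lemma~\ref{lem:q-vander-k} with $k=2$, $N_1=N_2=r$, $R=m$, and block ordering $(r,n_1),(r,n_2)$. The $q$-power it produces is $(N_1-n_1)n_2=(r-n_1)n_2$, which is exactly the leftover factor from Step 1. So the inner sum equals $\qbinom{2r}{m}$, and this yields the right-hand side.

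\emph{Step 3: ranges of summation.} The convention that $\qbinom{N}{k}=0$ outside $0\le k\le N$ restricts $n_1,n_2$ to $[0,r]$ automatically. The $q$-Vandermonde sum therefore runs over all $n_1+n_2=m$ with no boundary issues, and terms with $m\notin[0,2r]$ vanish on both sides.

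The only real obstacle is choosing the block ordering. The opposite ordering $(r,n_2),(r,n_1)$ would give the factor $q^{(r-n_2)n_1}$, which does not match $Q$. The exponent computation in Step 1 shows that the ordering $(r,n_1),(r,n_2)$ is the correct one, and once that choice is made the identity follows directly.
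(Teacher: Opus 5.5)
Your proposal is correct and follows the paper's proof: you group by $m_1=n_1+n_2$, apply the $q$-Vandermonde identity with block ordering $(r_1,n_1),(r_1,n_2)$, and use the fact that $Q(\mathbf{n})-(r_1-n_1)n_2=r_1^2-r_1m_1+m_1^2$ depends only on $m_1$. Your expansion in Step 1, which verifies this decomposition explicitly, is the same computation the paper states directly.
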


\begin{proof}
Let $S(q)$ denote the left-hand side, with fixed parameter $r_1 = n_5$. The sum is taken over $\mathbf{n} = (n_1, n_2)$:
\begin{equation*}
    S(q) = \sum_{\mathbf{n}} q^{Q(\mathbf{n})} \qbinom{r_1}{n_1} \qbinom{r_1}{n_2}.
\end{equation*}

We evaluate the summation directly by applying the $q$-Vandermonde identity (Lemma \ref{lem:q-vander-k}). 

We consider the sum over $n_1, n_2$, subject to the constraint $n_1 + n_2 = m_1$. By the $q$-Vandermonde identity with the block ordering $(r_1, n_1), (r_1, n_2)$ (see Remark~\ref{rmk:ordering}), we have
\begin{equation}
    \sum_{n_1+n_2=m_1} q^{(r_1-n_1)n_2} \qbinom{r_1}{n_1} \qbinom{r_1}{n_2} = \qbinom{2r_1}{m_1}.
\end{equation}

Miraculously, the remaining quadratic form
\begin{equation*}
    Q_1 = Q(\mathbf{n}) - (r_1-n_1)n_2
\end{equation*}
depends on $n_1, n_2$ strictly through $m_1$. Indeed, 
\begin{equation*}
    Q_1 = r_1^2 - r_1 m_1 + m_1^2.
\end{equation*}

As a result, we may factor out $q^{Q_1}$ and complete the evaluation:
\begin{align*}
    S(q) &= \sum_{m_1} q^{Q_1} \sum_{\substack{n_1,n_2\\n_1+n_2=m_1}} q^{(r_1-n_1)n_2} \qbinom{r_1}{n_1} \qbinom{r_1}{n_2} \\
    &= \sum_{m_1} q^{r_1^2 - r_1 m_1 + m_1^2} \qbinom{2r_1}{m_1}.
\end{align*}
This precisely matches the desired right-hand side, completing the proof.
\end{proof}

\subsection{Proof of $b=5$}
The $b=5$ special case of Conjecture~\ref{conj:sum} reads:
\begin{lemma}\label{lem:sum-reduction-35}
    For any $n_7=r_1\geq n_4=r_2\geq 0$, we have
    \begin{equation}
        \sum_{n_1,n_2} q^{Q(\mathbf{n})} \qbinom{r_2}{n_1}\qbinom{r_1}{n_2} 
        = \sum_{m_1} q^{r_1^2-r_1 m_1+m_1^2+r_2^2}\qbinom{r_1+r_2}{m_1},
    \end{equation}
    where
    \[
    Q(\mathbf{n})=n_1^2+n_2^2+n_4^2+n_7^2+n_1n_2+n_2n_4-n_1n_7-n_2n_7.
    \]
\end{lemma}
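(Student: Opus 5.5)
The plan is to copy the proof of Lemma~\ref{lem:sum-reduction-34}. We regroup the double sum over $(n_1,n_2)$ according to $m_1=n_1+n_2$, collapse each inner sum with a single two-block application of the $q$-Vandermonde identity (Lemma~\ref{lem:q-vander-k}), and check that the leftover exponent depends only on $m_1$, $r_1$, $r_2$.

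\textbf{Step 1: the $q$-Vandermonde step.} The inner sum is
\[
\sum_{n_1+n_2=m_1} q^{(\ast)}\qbinom{r_2}{n_1}\qbinom{r_1}{n_2}.
\]
Its two blocks are $(r_2,n_1)$ and $(r_1,n_2)$, so by Remark~\ref{rmk:ordering} the only candidate exponents are $(r_2-n_1)n_2$ and $(r_1-n_2)n_1$. Either one yields $\qbinom{r_1+r_2}{m_1}$, which is exactly the binomial on the right-hand side.

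\textbf{Step 2: choosing the ordering.} Substitute $n_4=r_2$ and $n_7=r_1$. Expanding the target exponent with $m_1=n_1+n_2$ gives
\[
r_1^2-r_1m_1+m_1^2+r_2^2=r_1^2+r_2^2+n_1^2+2n_1n_2+n_2^2-r_1n_1-r_1n_2 .
\]
Subtracting this from $Q(\mathbf{n})$ leaves
\[
n_1n_2+n_2r_2-2n_1n_2=(r_2-n_1)n_2 .
\]
This is precisely the Vandermonde exponent for the block ordering $(r_2,n_1),(r_1,n_2)$. So that ordering is the one to use.

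\textbf{Step 3: conclusion.} Write
\[
Q(\mathbf{n})=(r_2-n_1)n_2+\bigl(r_1^2-r_1m_1+m_1^2+r_2^2\bigr).
\]
Factor $q^{r_1^2-r_1m_1+m_1^2+r_2^2}$ out of each fixed-$m_1$ slice. By Step~1 the remaining inner sum equals $\qbinom{r_1+r_2}{m_1}$, and summing over $m_1$ gives the right-hand side.

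Boundary terms need no special treatment. The binomials vanish outside $0\le n_1\le r_2$ and $0\le n_2\le r_1$, and Lemma~\ref{lem:q-vander-k} is stated with this zero convention.

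The only real obstacle is Step~2, namely finding a block ordering whose exponent is exactly $Q$ minus a function of $m_1$. Here the expansion shows the choice works, so the proof should be as short as the $b=4$ case.
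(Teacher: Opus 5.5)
Your proposal is correct and is essentially the paper's own proof. You group by $m_1=n_1+n_2$, apply $q$-Vandermonde with block ordering $(r_2,n_1),(r_1,n_2)$, and check that $Q(\mathbf{n})-(r_2-n_1)n_2=r_1^2-r_1m_1+m_1^2+r_2^2$; your explicit expansion confirming this is right.
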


\begin{proof}
Let $S(q)$ denote the left-hand side, with fixed parameters $r_1 = n_7$ and $r_2 = n_4$. The sum is taken over $\mathbf{n} = (n_1, n_2)$:
\begin{equation*}
    S(q) = \sum_{\mathbf{n}} q^{Q(\mathbf{n})} \qbinom{r_2}{n_1} \qbinom{r_1}{n_2}.
\end{equation*}

We evaluate the summation directly by applying the $q$-Vandermonde identity (Lemma \ref{lem:q-vander-k}). 

We consider the sum over $n_1, n_2$, subject to the constraint $n_1 + n_2 = m_1$. By the $q$-Vandermonde identity with the block ordering $(r_2, n_1), (r_1, n_2)$ (see Remark~\ref{rmk:ordering}), we have
\begin{equation}
    \sum_{n_1+n_2=m_1} q^{(r_2-n_1)n_2} \qbinom{r_2}{n_1} \qbinom{r_1}{n_2} = \qbinom{r_1+r_2}{m_1}.
\end{equation}

Miraculously, the remaining quadratic form
\begin{equation*}
    Q_1 = Q(\mathbf{n}) - (r_2-n_1)n_2
\end{equation*}
depends on $n_1, n_2$ strictly through $m_1$. Indeed, 
\begin{equation*}
    Q_1 = r_1^2 - r_1 m_1 + m_1^2+r_2^2.
\end{equation*}

As a result, we may factor out $q^{Q_1}$ and complete the evaluation:
\begin{align*}
    S(q) &= \sum_{m_1} q^{Q_1} \sum_{\substack{n_1,n_2\\n_1+n_2=m_1}} q^{(r_2-n_1)n_2} \qbinom{r_2}{n_1} \qbinom{r_1}{n_2} \\
    &= \sum_{m_1} q^{r_1^2 - r_1 m_1 + m_1^2+r_2^2} \qbinom{r_1+r_2}{m_1}.
\end{align*}
This precisely matches the desired right-hand side, completing the proof.
\end{proof}

\subsection{Proof of $b=7$}
The $b=7$ special case of Conjecture~\ref{conj:sum}  reads:
\begin{lemma}\label{lem:sum-reduction-37}
    For any $n_{11}=r_1\geq n_8=r_2\geq 0$, we have
    \begin{equation}
        \begin{multlined}
            \sum_{n_1,n_2,n_4,n_5}
q^{Q(\mathbf{n})}
\qbinom{n_8}{n_5}\qbinom{n_5}{n_2}\qbinom{n_8}{n_1}\qbinom{n_{11}-n_1}{n_4-n_1} 
\\= \sum_{m_1,m_2} q^{r_1^2-r_1 m_1+m_1^2+r_2^2-r_2m_2+m_2^2}\qbinom{r_1-r_2+m_2}{m_1}\qbinom{2r_2}{m_2},
        \end{multlined}
    \end{equation}
    where
    \begin{equation}
    Q(\mathbf{n}) = 
    \left(\sum_{i\in \{1,2,4,5,8,11\}} n_i^2\right) + n_1n_2 + n_2n_4 + n_4n_5 - n_1n_8 - n_2n_{11} - n_4n_{11},
    \end{equation}
\end{lemma}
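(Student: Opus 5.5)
The plan is to mimic the $q=1$ bijection of Lemma~\ref{lem:flag-straightening} algebraically, taking $\ell=2$, $s=r_2$, $x_1=n_1$, $x_2=n_4$, $y_1=n_2$, $y_2=n_5$. Each set-theoretic step will become either a $q$-Vandermonde step (Lemma~\ref{lem:q-vander-k}) or a $q$-absorption step. At each step we keep track of the $q$-power produced, and choose block orderings so that what remains of $Q(\mathbf{n})$ depends only on the new variables. In that bijection
\[
M_2=Y_2\sqcup(X_2\cap R_2),\qquad M_1=Y_1\sqcup(X_2\cap D_1)\sqcup(X_1\cap R_2).
\]
So the target variables are $m_2=n_5+|X_2\cap R_2|$ and $m_1=n_2+|X_2\cap D_1|+n_1$. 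We must therefore split $n_4-n_1$ according to $R_1=D_1\sqcup R_2$.

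\emph{Step 1.} Expand $\qbinom{r_1-n_1}{n_4-n_1}$ by $q$-Vandermonde with $r_1-n_1=(r_1-r_2)+(r_2-n_1)$, introducing a summation index $j=|X_2\cap D_1|$:
\[
\qbinom{r_1-n_1}{n_4-n_1}=\sum_j q^{(\ast)}\qbinom{r_1-r_2}{j}\qbinom{r_2-n_1}{n_4-n_1-j}.
\]
\emph{Step 2.} Apply $q$-absorption in the form
\[
\qbinom{r_2}{n_1}\qbinom{r_2-n_1}{t}=\qbinom{r_2}{n_1+t}\qbinom{n_1+t}{n_1},
\]
with $t=n_4-n_1-j$, and set $u=n_4-j=|X_2\cap R_2|$.

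\emph{Step 3.} Combine $\qbinom{r_2}{u}$ with $\qbinom{r_2}{n_5}$ by a two-block $q$-Vandermonde over $u+n_5=m_2$, producing $\qbinom{2r_2}{m_2}$. This step needs a factor that links $n_5$ to the remaining binomials. That link is $\qbinom{n_5}{n_2}$, so before summing we use absorption to move it to $\qbinom{r_2}{n_2}\qbinom{r_2-n_2}{n_5-n_2}$ (and similarly $\qbinom{u}{n_1}$). This reorganizes the sum so that $n_1,n_2$ become subsets chosen inside the "$M_2$ universe" of size $m_2$.

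\emph{Step 4.} Finally, combine $\qbinom{r_1-r_2}{j}$ with the binomials in $n_1,n_2$, whose upper parameters sum to $m_2$, by a three-block $q$-Vandermonde over $j+n_1+n_2=m_1$. This produces $\qbinom{r_1-r_2+m_2}{m_1}$.

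The main obstacle is the bookkeeping of exponents. Every Vandermonde step contributes a cross term depending on its block ordering, and the orderings must be chosen so that $Q(\mathbf{n})$ minus all accumulated cross terms collapses to
\[
r_1^2-r_1m_1+m_1^2+r_2^2-r_2m_2+m_2^2.
\]
This is the "miracle" observed for $b=4,5$. I would first write $Q$ in the variables $(j,u,n_1,n_2,n_5)$ and try the orderings that put the $D_1$-block last in Step~4 and the $R_2$-block first in Steps~1 and~3. If these fail, I would permute the orders in which the absorptions are done (Steps~2 versus~3). As a sanity check of the result, I would specialize to $q=1$, where Theorem~\ref{thm:q=1} guarantees agreement, and verify small values of $(r_1,r_2)$ numerically. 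By Remark~\ref{rmk:fail}, some ordering is expected to work for $b=7$.
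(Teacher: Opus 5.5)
Your skeleton is the paper's. Your $j$ and $u$ are its $c$ and $a$ (with $n_4=a+c$), your Steps~1--2 are its first two moves, and your Step~4 is its three-block $q$-Vandermonde. However, Steps~3 and~4 cannot be carried out in the order you give.

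\begin{itemize}
\item While $\qbinom{u}{n_1}\qbinom{n_5}{n_2}$ are still in the summand, you cannot sum $\qbinom{r_2}{u}\qbinom{r_2}{n_5}$ over $u+n_5=m_2$.
\item Your proposed absorption does not rescue this. Rewriting $\qbinom{r_2}{u}\qbinom{u}{n_1}$ as $\qbinom{r_2}{n_1}\qbinom{r_2-n_1}{u-n_1}$ just undoes Step~2. The Vandermonde sum over $u+n_5=m_2$ then gives $\qbinom{2r_2-n_1-n_2}{m_2-n_1-n_2}$, not $\qbinom{2r_2}{m_2}$.
\item Step~4 uses $\qbinom{u}{n_1}\qbinom{n_5}{n_2}$, which are gone once $u,n_5$ have been summed out.
\end{itemize}

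The repair is to swap the two steps. For fixed $u,n_5$, Step~4 gives $\qbinom{r_1-r_2+u+n_5}{m_1}$, which depends on $(u,n_5)$ only through $m_2$. Step~3 then applies directly to $\qbinom{r_2}{u}\qbinom{r_2}{n_5}$, with no extra absorption.

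Your detour can also be completed with suitable orderings, but only with two further moves you did not list. One is a Vandermonde $\qbinom{r_2}{n_1}\qbinom{r_2}{n_2}\to\qbinom{2r_2}{p}$ with $p=n_1+n_2$. The other is the absorption $\qbinom{2r_2}{p}\qbinom{2r_2-p}{m_2-p}=\qbinom{2r_2}{m_2}\qbinom{m_2}{p}$. The last step is then a two-block Vandermonde in $p+j=m_1$, not your Step~4.

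The more serious gap is that the exponent collapse is never checked. That collapse is the whole content of the lemma, and you leave it to trial and error. Your first-choice ordering in fact fails.

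\begin{itemize}
\item Putting the $R_2$-block $(r_2-n_1,u-n_1)$ first in Step~1 contributes $q^{(r_2-u)j}$. The part of the exponent that is purely quadratic in $(n_1,n_2,j)$ is then $n_1^2+n_2^2+j^2+n_1n_2+n_2j$.
\item Every ordering of the three-block Vandermonde removes a cross term $\sum_{i<l}(N_i-k_i)k_l$ whose pure quadratic part is $-(n_1n_2+n_1j+n_2j)$.
\item What remains is $m_1^2-n_1j$, which is not a function of $m_1$.
\end{itemize}

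The ordering that works is the opposite one in Step~1:
\begin{itemize}
\item In Step~1, use $(r_1-r_2,j),(r_2-n_1,u-n_1)$. Its cross term $(r_1-r_2-j)(u-n_1)$ supplies the missing $+n_1j$.
\item In Step~4, use $(u,n_1),(n_5,n_2),(r_1-r_2,j)$. This leaves $r_1^2-r_1m_1+m_1^2+u^2+un_5+n_5^2-r_2u+r_2^2$.
\item In Step~3, use $(r_2,u),(r_2,n_5)$. This removes $(r_2-u)n_5$ and leaves exactly $r_1^2-r_1m_1+m_1^2+r_2^2-r_2m_2+m_2^2$.
\end{itemize}

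This explicit verification is what the paper's proof consists of. Citing Remark~\ref{rmk:fail} is circular, since that remark only reports that this computation succeeds. Checks at $q=1$ or for small $(r_1,r_2)$ cannot replace it.
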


\begin{proof}
Let $S(q)$ denote the left-hand side.
Applying Lemma~\ref{lem:q-vander-k} to the last $q$-binomial coefficient with block ordering
\(
    (r_1-r_2,c), (r_2-n_1,a-n_1),
\)
and using
\[
    \qbinom{r_2}{n_1}\qbinom{r_2-n_1}{a-n_1}
    =\qbinom{r_2}{a}\qbinom{a}{n_1},
\]
we obtain
\[
    S(q)=\sum_{n_1,n_2,a,c,n_5}q^{Q_1}
    \qbinom{a}{n_1}\qbinom{n_5}{n_2}
    \qbinom{r_1-r_2}{c}
    \qbinom{r_2}{a}\qbinom{r_2}{n_5},
\]
where
\[
    Q_1=
    \left.Q(\mathbf n)\right|_{n_4=a+c}
    +(r_1-r_2-c)(a-n_1).
\]

Set
\[
    m_1=n_1+n_2+c,\qquad m_2=a+n_5.
\]
The $q$-Vandermonde identity with block ordering
\[
    (a,n_1),\qquad(n_5,n_2),\qquad(r_1-r_2,c)
\]
gives
\[
    \sum_{n_1+n_2+c=m_1}q^{E_1}
    \qbinom{a}{n_1}\qbinom{n_5}{n_2}
    \qbinom{r_1-r_2}{c}
    =\qbinom{r_1-r_2+m_2}{m_1},
\]
where
\[
    E_1=(a-n_1)n_2+(a-n_1)c+(n_5-n_2)c.
\]

A direct expansion gives
\[
    Q_2\coloneqq Q_1-E_1=r_1^2-r_1 m_1+m_1^2
    +a^2+an_5+n_5^2-ar_2+r_2^2,
\]
which does not depend on $n_1,n_2,c$.
Consequently,
\begin{equation}\label{eq:b=7}
    S(q)=\sum_{m_1,a,n_5}
    q^{Q_2}
    \qbinom{r_1-r_2+m_2}{m_1}
    \qbinom{r_2}{a}\qbinom{r_2}{n_5}.
\end{equation}

Now put $m_2=a+n_5$. The $q$-Vandermonde identity with block ordering
$ (r_2,a),(r_2,n_5)$ yields
\[
    \sum_{a+n_5=m_2}q^{E_2}
    \qbinom{r_2}{a}\qbinom{r_2}{n_5}
    =\qbinom{2r_2}{m_2},
\]
where $E_2=(r_2-a)n_5$.  Whenever $a+n_5=m_2$, we have
\[
    Q_3\coloneqq Q_2-E_2
    =r_1^2-r_1 m_1+m_1^2+r_2^2-r_2 m_2+m_2^2,
\]
which again does not depend on $a,n_5$. 

Grouping the right-hand side of \eqref{eq:b=7} by $m_2$ therefore gives
\[
    S(q)=\sum_{m_1,m_2}
    q^{Q_3}
    \qbinom{r_1-r_2+m_2}{m_1}
    \qbinom{2r_2}{m_2},
\]
which is the desired identity.
\end{proof}

\section{Proof of Theorem~\ref{thm:sum} for $b=8$}
The proof method in $b=4,5,7$ no longer works; see Remark~\ref{rmk:fail}. Instead, we use the annihilator method.

\subsection{The setting}\label{subsec:setting}

Define quadratic forms
\begin{align*}
Q(r_1,r_2,r_3,n_1,n_2,n_4, n_5)&:=
r_1^2+r_2^2+r_3^2+n_1^2+n_2^2+n_4^2+n_5^2\\
&\hspace*{0.5cm}
+  n_1 n_2 + n_2 n_4 + n_4 n_5 + n_5 r_3 
- n_1 r_2 - n_2 r_2 - n_4 r_1 - n_5 r_1
\end{align*}
and
\[
P(r_1,r_2,r_3,m_1,m_2):=
r_1^2 - r_1 m_1 + m_1^2 + r_2^2 - r_2 m_2 
+ m_2^2 + r_3^2.
\]
Define
\begin{eqnarray}\label{eq:S4 sd}
s_4(r_1,r_2,r_3,n_1,n_2,n_4, n_5):=
q^{Q(r_1,r_2,r_3,n_1,n_2,n_4, n_5)}
\qbin{r_3}{n_4}{q}\qbin{n_4}{n_1}{q}
\qbin{r_2}{n_2}{q}\qbin{r_1-n_2}{n_5-n_2}{q}
\end{eqnarray}
and
\begin{eqnarray}\label{eq:S2 sd}
s_2(r_1,r_2,r_3,m_1,m_2):=
q^{P(r_1,r_2,r_3,m_1,m_2)}
\qbin{r_1-r_2+m_2}{m_1}{q}\qbin{r_2+r_3}{m_2}{q}.
\end{eqnarray}

Then the $b=8$ case of Conjecture~\ref{conj:sum} is equivalent to
\begin{lemma}\label{lem:4}
Let
\begin{equation}\label{eq:S4}
S_4(r_1,r_2,r_3):=
\sum_{n_1,n_2,n_4,n_5\geq 0} 
s_4(r_1,r_2,r_3,n_1,n_2,n_4,n_5)
\end{equation}
and
\begin{equation}\label{eq:S2}
S_2(r_1,r_2,r_3):=
\sum_{m_1,m_2\geq 0} 
s_2(r_1,r_2,r_3,m_1,m_2).
\end{equation}
Then for $r_1\geq r_2 \geq r_3\geq 0$,
\begin{equation}
S_4(r_1,r_2,r_3)=S_2(r_1,r_2,r_3).
\end{equation}
\end{lemma}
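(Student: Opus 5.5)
We will prove Lemma~\ref{lem:4} by the annihilator (creative telescoping) method announced at the start of this section. Both $S_4$ and $S_2$ are finite sums of $q$-hypergeometric terms in the parameters $r_1,r_2,r_3$: the summands \eqref{eq:S4 sd} and \eqref{eq:S2 sd} have the property that each shift quotient $s(\dots,r_i+1,\dots)/s$ and $s(\dots,n_j+1,\dots)/s$ is a rational function of $q$ and of $q^{r_i},q^{n_j}$. The plan is to compute a system of linear $q$-recurrences in $r_1,r_2,r_3$ satisfied by $S_2$, then show that $S_4$ satisfies the same system, and finally compare enough initial values.

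The key steps, in order, are as follows.

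\begin{enumerate}
\item Apply a multivariate creative telescoping algorithm (e.g.\ the $q$-Zeilberger approach for multiple sums, as in Riese's \texttt{qMultiSum} or Koutschan's \texttt{HolonomicFunctions}) to $s_2$. This yields operators $L_i$ in the shift operators $\sigma_{r_1},\sigma_{r_2},\sigma_{r_3}$, with polynomial coefficients in $q^{r_1},q^{r_2},q^{r_3}$, such that
\[
L_i s_2=\Delta_{m_1}(\cdots)+\Delta_{m_2}(\cdots).
\]
Because $s_2$ has natural boundaries (the $q$-binomials vanish outside finite ranges), the certificates telescope to zero. Hence $L_i S_2=0$.

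\item Do the same for the four-fold sum $s_4$. Rather than recomputing from scratch, it is preferable to certify directly that $L_i S_4=0$. To do this we exhibit rational certificates $R_{i,j}$ with
\[
L_i s_4=\sum_{j\in\{1,2,4,5\}}\Delta_{n_j}\bigl(R_{i,j}s_4\bigr).
\]
Dividing by $s_4$ reduces this to an identity of rational functions in $q^{r},q^{n}$, which can be checked mechanically. We must also verify that the certificates have no poles on the summation range, so that the boundary terms vanish.

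\item To lower the cost, first perform the inner sums that are already elementary. For instance, sum over $n_1$ by $q$-Vandermonde-type evaluations as in the proof for $b=7$, reducing $S_4$ to a double or triple sum before telescoping.

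\item Check that the leading coefficients of the recurrence system do not vanish for $r_1\ge r_2\ge r_3\ge0$. Then the system determines $S$ on the cone from finitely many initial values near $r_1=r_2=r_3=0$, and we verify $S_4=S_2$ on those values directly.
\end{enumerate}

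The main obstacle is Step 2: finding certificates for a fourfold sum whose quadratic exponent couples all variables, and controlling the singular points of the recurrences. Those singular points matter because the domain is the cone $r_1\ge r_2\ge r_3$ rather than a full orthant, so initial conditions must be chosen along its boundary faces (e.g.\ $r_2=r_3$, $r_1=r_2$). If full multivariate telescoping is too heavy, we would instead fix two parameters symbolically and derive a single recurrence in $r_1$ (then in $r_2$, $r_3$), proceeding one variable at a time. In that approach, the base cases at the boundary faces are handled by the analogous lower-dimensional identities, which are proved the same way.
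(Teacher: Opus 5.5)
Your outline follows the paper's broad strategy: recurrences for $S_2$ from \texttt{qMultiSum}, transfer to $S_4$, reduction along boundary faces, and finitely many initial checks. However, the step you flag as the main obstacle is left as an assertion, and it is exactly where the paper needs an extra idea. The decisive recurrence is \texttt{Prec}. It lowers $r_3$ by one, has coefficient $-q^{7+5r_1+r_2}$ on $S(r_1,r_2,r_3+1)$, and is valid for $r_1\ge r_2\ge r_3+1\ge 0$. The authors report that no direct \texttt{qMultiSum} search reproduces \texttt{Prec} for the fourfold sum, because of the structure-set ``needle in a haystack'' problem; that is the practical content of your Step~2. They get around it differently:
\begin{itemize}
\item they compute two other annihilators $P_2,P_3$ of $S_4$ that \texttt{qMultiSum} does find;
\item a noncommutative Gr\"obner basis reduction in the Ore algebra of $q$-shifts then shows $P=C_2P_2+C_3P_3$ for explicit operators $C_2,C_3$;
\item this identity can be checked independently by pure operator algebra.
\end{itemize}
Your Step~2 is not wrong in principle. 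Since $C_2,C_3$ involve only shifts in $r$ with coefficients free of $n$, they commute with the $\Delta_{n_j}$ and turn the certificates of $P_2,P_3$ into certificates for $P$ acting on $s_4$. But your plan gives no mechanism for producing them, and ideal membership is what makes the computation feasible.

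Two further points.

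First, the conclusion of your Step~4 (finitely many initial values near the origin) does not follow from nonvanishing leading coefficients alone on a cone. \texttt{Prec} expresses $S(r_1,r_2,r_3+1)$ through values at level $r_3$ with \emph{larger} $r_1,r_2$, so it only reduces the cone to the whole face $r_3=0$. The paper then needs the cascade you sketch in your last paragraph:
\begin{itemize}
\item On $r_3=0$, where $n_4=n_1=0$ and $S_4$ becomes a double sum, a recurrence \texttt{P2rec} found independently for both sums reduces everything to the diagonals $R_1=R_2$ and $R_1=R_2+1$.
\item On each diagonal, both sums satisfy the same order-$3$ recurrence in $R_1$, which leaves six initial values to check.
\end{itemize}

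Second, your Step~3 is not available. Every summation variable of $s_4$ enters through a full square $q^{n_j^2}$ rather than $q^{\binom{n_j}{2}}$. So no single inner sum can be evaluated by the $q$-binomial theorem or $q$-Vandermonde (compare Remark~\ref{rmk:fail}). The paper uses no inner evaluation beyond the trivial collapse at $r_3=0$.
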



Our proof of Lemma~\ref{lem:4} is heavily
computer-assisted. It makes substantial
use of computer algebra packages developed
at RISC which are freely available
at \url{https://combinatorics.risc.jku.at/software}.

All Mathematica computations have been
executed on a standard laptop: Lenovo,
32 GB memory, Intel Core i7-1260P. The
run times of the procedure calls range
from fractions of a second to roughly
one minute.

Despite relying on sophisticated algorithms,
a general important aspect of our computational
proof methodology is this:
owing to the structure of
the algorithms used, all our computations
can be verified by elementary means 
(essentially high-school algebra)
\textit{independently from the algorithmic
steps which computed them}; see 
Subsection~\ref{subsec:Prop1ProofIndep}.
\subsection{Computer Algebra Packages used}\label{sec:CA}

\subsubsection{\texttt{qMultiSum} to compute
recurrences}\label{subsec:qMult}

The main package used is Axel Riese's Mathematica implementation~\cite{Riese} of $q$-WZ theory.

\begin{mma}
\In << RISC`qMultiSum`\\
\Print \LoadP{Version 2.54 written by Axel Riese
\copyright\ RISC, Johannes Kepler University Linz}\\
\end{mma}
We will use this package to derive a variety
of recurrences for the sums $S_j(r_1, r_2,r_3)$, $j\in\{2,4\}$. To this end, we need to input
the respective summands:
\begin{mma}
\In
P[m1\_, m2\_, r3\_, r2\_, r1\_]:=
r1^2 - r1\, m1 + m1^2 + r2^2 - r2\, m2 
+ m2^2 + r3^2
\\
\In
S2Summand[r1\_, r2\_, r3\_, m1\_, m2\_] := 
 q^{P[m1, m2, r3, r2, r1]}
 qBinomial[r1 - r2 + m2, m1, q] 
 \newline \hspace*{6.1cm}\ast qBinomial[r2 + r3, m2, q]
\\
\end{mma}
and
\begin{mma}
\In
Q[n1\_, n2\_, n4\_, n5\_, r3\_, r2\_, r1\_]:=
r1^2+r2^2+r3^2+n1^2+n2^2+n4^2+n5^2
+  n1\, n2 + n2\, n4 
\newline \hspace*{5.5cm}
+ n4\, n5 + n5\, r3 
- n1\, r2 - n2\, r2 - n4\, r1 - n5\, r1
\\
\In
S4Summand[r1\_, r2\_, r3\_, n1\_, n2\_, n4\_, n5\_] := 
 q^{Q[n1, n2, n4, n5, r3, r2, r1]} qBinomial[r3, n4, q]
  \newline \hspace*{7.2cm}
 \ast qBinomial[n4, n1, q] qBinomial[r2, n2, q] 
\newline \hspace*{7.2cm} 
 \ast qBinomial[r1 - n2, n5 - n2, q]
\\
\end{mma}

In addition to the basic $q$-WZ algorithm to compute
recurrences as outlined
in~\cite{WZ}, 
Riese's \texttt{qMultiSum} package 
implements important additional features
and extensions, for
example, ``Verbaeten completion''. 
Verbaeten's theory~\cite{Verbaeten}, which was
revisited and algorithmically exploited by Hornegger~\cite{Hornegger} and
Wegschaider~\cite{Wegschaider},
helps to input structure sets in an optimized 
fashion. A structure set is a subset of an integer lattice $\Z^n$ such that 
its points represent the shifts
arising in a recurrence. As an example,
we compute a recurrence for $S_2(r_1,r_2,r_3)$
using a \texttt{qMultiSum} procedure call,
\begin{mma}
\In
recs1 = 
\newline \hspace*{0.3cm}
qFindRecurrence[
   S2Summand[r1, r2, r3, m1, m2], \{r1, r2, r3\}, \{m1, m2\}, \{0, 0, 
    0\}, \{1, 1\}]; \label{mma:recs1}
\\
\In
Length[recs1]
\\
\Out
24
\\
\end{mma}
This means, the output \texttt{recs1} consists
of a list of $24$ entries, each of them being
a summand recurrence. Let us pick the entry at
position $2$,
\begin{mma}
\In
P1recSd=recs1[[2]]
\\
\Out
 -q^{2 r2 + r3} F[-2 + r1, -3 + r2, -1 + r3, m1, -1 + m2] - 
 q^{3 + 2 r3} F[-2 + r1, -2 + r2, -2 + r3, m1, m2]
\newline \hspace*{0.3cm}
+q^6 F[-2 + r1, -2 + r2, -1 + r3, m1, m2] = 0 
\label{mma:P1 SdRec}\\
\end{mma}
The expression \texttt{F[$\dots$]} is used
generically by the program; it has to
be interpreted as
\[
\texttt{F[r1,r2,r3,m1,m2]} =
s_2(r_1,r_2,r_3,m_1,m_2).
\]
Hence \texttt{Out[\ref{mma:P1 SdRec}]} is a recurrence for the summand of
$S_2(r_1,r_2,r_3)$.
Its structure set can be extracted from the shifts which are,
\begin{equation}\label{eq:VB1}
\{(2,3,1,0,1), (2,2,2,0,0), 
(2,2,1,0,0)\}\subseteq \Z^5.
\end{equation}

In the call \texttt{In[\ref{mma:recs1}]},
we specified a structure
set with ``\texttt{\{0,0,0\},\{1,1\}}.'' This tells the program to search
for summand recurrences whose structure sets 
are subsets of the corresponding Verbaeten completion of the set specified in this way;
here the shorthand notation refers to the set
\begin{equation*}
\{r_1,r_2,r_3,m_1,m_2): r_1=0, r_2=0, r_3=0,
0\leq m_1\leq 1, 0\leq m_2\leq 1\}.
\end{equation*}
We stress that Verbaeten completions are
heavily dependent on the particular form
of the given
summand expression; \texttt{qMultiSum} provides
an option to compute it,
\begin{mma}
\In
qFindRecurrence[
 S2Summand[r1, r2, r3, m1, m2], \{r1, r2, r3\}, 
 \{m1, m2\}, \{0, 0, 0\}, \{1, 1\},
 \newline \hspace*{5.3cm}
   OnlyStructSet \rightarrow True]
\\
\Out
\{
\{0, 0, 3, 0, 0\}, \{1, 0, 3, 0, 0\}, 
\{1, 1, 2, 0, 0\}, \{1, 1, 3, 0, 0\}, 
  \{1, 2, 2, 0, 1\}, \{2, 0, 3, 0, 0\}, 
  \{2, 1, 2, 0, 0\}, 
 \newline \hspace*{0.3cm}
  \{2, 1, 2, 1, 0\}, 
  \{2, 1, 3, 0, 0\}, \{2, 1, 3, 1, 0\}, 
  \{2, 2, 1, 0, 0\}, \{2, 2, 2, 0,  0\}, 
  \{2, 2, 2, 0, 1\}, \{2, 2, 2, 1, 1\}, 
\newline \hspace*{0.3cm} 
  \{2, 2, 3, 0, 0\}, \{2, 3, 1, 0, 1\}, 
 \{3, 1, 2, 0, 0\}, \{3, 1, 2, 1, 0\}, 
  \{3, 1, 3, 0, 0\}, \{3, 1, 3, 1, 
  0\}, 
  \{3, 2, 1, 0, 0\}, 
\newline \hspace*{0.3cm} 
  \{3, 2, 1, 1, 0\}, 
  \{3, 2, 2, 0, 0\}, \{3, 2, 2, 0, 
  1\}, 
  \{3, 2, 2, 1, 0\}, 
  \{3, 2, 2, 1, 1\}, \{3, 2, 3, 0, 0\}, 
  \{3, 2, 3, 1, 
  0\}, 
 \newline \hspace*{0.3cm}  
  \{3, 3, 1, 0, 1\}, \{3, 3, 1, 1, 1\}, 
  \{4, 2, 1, 1, 0\}, \{4, 2, 2, 1, 
  0\}, 
  \{4, 2, 3, 1, 0\}, 
  \{4, 3, 0, 1, 0\}, 
  \{4, 3, 1, 1, 0\}, 
 \newline \hspace*{0.3cm}  
  \{4, 3, 1, 1, 
  1\}, 
  \{4, 3, 2, 1, 0\}, \{4, 4, 0, 1, 1\}\}.
\label{mma:StrSet incl P1}
\\ 
\end{mma}
The size of this Verbaeten completion makes
it plausible,
at least in this case, why the program found $24$
summand recurrences where each of them
 has a  structure set being
a subset of \texttt{Out[\ref{mma:StrSet incl P1}]}; the subset~\eqref{eq:VB1} 
exemplifies this.

Next we continue our example where we
set the goal to compute a recurrence for the sum $S_2(r_1,r_2,r_3)$. To this end, notice that each of the
coefficients of
the summand recurrence \texttt{P1recSd} 
from~\texttt{Out[\ref{mma:P1 SdRec}]} is 
free of the summation variables $m_1$ and $m_2$.
Moreover, the summand 
$s_2(r_1,r_2,r_3,m_1,m_2) =
\texttt{F[r1,r2,r3,m1,m2]}$ is non-zero
for only finitely many points $(m_1,m_2)\in
\Z^2$; i.e., it has finite support with respect
to $m_1$ and $m_2$. As a consequence, this
allows us to flexibly sum 
both sides of the recurrence \texttt{P1recSd}
over all non-negative $m_1$ and $m_2$ to obtain
a recurrence for the sum $S_2(r_1,r_2,r_3)$.
This is automatized by a \texttt{MultipleSum}
command,
\begin{mma}
\In
P1rec=qSumRecurrence[P1recSd, 3]
\\
\Out
-q^{2 + 2 r2 + r3} SUM[r1, r2, 1 + r3] - 
  q^{1 + 2 r3} SUM[r1, 1 + r2, r3] + SUM[r1, 1 + r2, 1 + r3] = 0
\label{mma:P1rec}\\
\end{mma}
Again \texttt{SUM[$\dots$]} is used generically
by the program, and the output means, we obtained a recurrence for 
\[
S_2(r_1,r_2,r_3)=\texttt{SUM[r1,r2,r3]}.
\]
Notice that the sum recurrence is given
in normalized form; i.e., all shifts are
non-negative.

\begin{remark}
The ``$3$'' in ``\texttt{qSumRecurrence[P1recSd, 3]}''
tells the program to consider the first $3$ variables
in \texttt{F[r1,r2,r3,m1,m2]} as the independent variables;
in other words, this means,
summation is carried out with respect
to the remaining variables $m_1$ and $m_2$.
\end{remark}

\subsubsection{\texttt{HolonomicFunctions} to compute
relations between $q$-shift operators}\label{subsec:qHoloFus}

To successfully execute our strategy to prove
Lemma~\ref{lem:4}, we need to show that a
particular recurrence for $S_4(r_1,r_2,r_3)$,
computed with \texttt{qMultiSum},
is also a recurrence for $S_2(r_1,r_2,r_3)$.
To this end, we invoke  Christoph Koutschan's
RISC package \texttt{HolonomicFunctions}~\cite{Koutschan} written in Mathematica. 

\begin{mma}
\In << RISC`HolonomicFunctions`\\
\Print \LoadP{Version 1.7.3 (21-Mar-2017)
written by Christoph Koutschan
\newline
\copyright\ RISC, Johannes Kepler University Linz}\\
\end{mma}
This package implements various aspects and
extensions of Zeilberger's holonomic systems approach to special functions identities~\cite{Zholo}. For our proof
we will use a different feature of Koutschan's
package; namely, it implements non-commutative
Gr\"obner bases computations for Ore algebras.

To connect to our context, we transform 
recurrences obtained with \texttt{qMultiSum}
into $q$-shift operators. This transformation
is also supported by Koutschan's package.
To this end, we first
have to define the algebra \texttt{alg}
of $q$-shift operators,
\begin{mma}
\In
alg = OreAlgebra[QS[qr_1, q^{r_1}], QS[qr_2, q^{r_2}], QS[qr_3, q^{r_3}]]
\label{mma:alg}\\
\Out
\KK(q, qr_1, qr_2, qr_3)[S_{qr_1, q}; S_{qr_1, q}, 0]
[S_{qr_2, q}; S_{qr_2, q}, 0]
[S_{qr_3, q}; S_{qr_3, q}, 0]
\\
\end{mma}
This is an algebraic coding of the non-commutative
algebra of $q$-shift operators 
$S_{\text{qr}_1, q}$,
$S_{\text{qr}_2, q}$, and $S_{\text{qr}_3, q}$; the coefficient
domain of these operators are the rational 
functions $\KK(q, \text{qr}_1, \text{qr}_2, \text{qr}_3)$. On the level of the algebra
generators, non-commutativity of 
the $q$-shift operators
comes in as follows. For $j\in\{1,2,3\}$ ,
\begin{equation}\label{eq:non-comm}
S_{\text{qr}_j, q}\dast \text{qr}_j 
= 
q\, \text{qr}_j \dast S_{\text{qr}_j, q} ;
\end{equation}
i.e., the symbols $\text{qr}_j$ stand for the 
$q$-powers $q^{r_j}$. In our context, $\KK=\Q$.

Sometimes we need to make
non-commutativity explicit, in particular,
for certain inputs to 
\texttt{HolonomicFunctions}; in such cases
we shall write ``$\dast$'' for non-commutative
multiplication.  
Nevertheless, to keep things visually simple we
often omit to write ``$\dast$'' if things are clear
from the context; e.g., in case a power-product
of $q$-shifts stands to right of a term. This 
means, we shall write
\[
q\, \text{qr}_j S_{\text{qr}_j, q}
\text{\, instead of\, }
q\, \text{qr}_j \dast S_{\text{qr}_j, q}.
\]

We conclude this section with an example showing
how to transform the sum recurrence \texttt{P1rec} from \texttt{Out[\ref{mma:P1rec}]} into an element of this algebra of $q$-shifts,
\begin{mma}
\In
P_1 = ToOrePolynomial[
{QS}[{qr}_2,q^{r_2}]\dast
{QS}[{qr}_3,q^{r_3}]
+q^{2r_2+r_3+2}
   -{QS}[{qr}_3,q^{r_3}]
\newline \hspace*{4.0cm}   
   -q^{2 r_3+1}
   {QS}[{qr}_2,q^{r_2}], alg]
\\
\Out
S_{\text{qr}_2,q}S_{\text{qr}_3,q}-q
   \text{qr}_3^2S_{\text{qr}_2,q}-q^2 \text{qr}_2^2
   \text{qr}_3S_{\text{qr}_3,q}
\label{mma:P1}\\
\end{mma}
Translating back to recurrence form, this means,
{\small
\begin{align*}
P_1 S_2(r_1,r_2,r_3)
&= \left(
S_{\text{qr}_2,q}S_{\text{qr}_3,q}-q
   \text{qr}_3^2S_{\text{qr}_2,q}-q^2 \text{qr}_2^2
   \text{qr}_3S_{\text{qr}_3,q}
   \right) S_2(r_1,r_2,r_3)
   \\
&=S_2(r_1,r_2+1,r_3+1) - q^{1+2 r_3} S_2(r_1,r_2+1,r_3)- q^{2+2 r_2+r_3}
S_2(r_1,r_2,r_3+1)=0.
\end{align*}
}
As mentioned above, to show that a
particular recurrence for $S_4(r_1,r_2,r_3)$,
computed with \texttt{qMultiSum},
is also satisfied by $S_2(r_1,r_2,r_3)$,
in Subsection~\ref{subsec:Prop1Proof} we will transform recurrences into
operator form in order to invoke the 
non-commutative Gr\"obner basis procedure
from Koutschan's \texttt{HolonomicFunctions}
package. For further details on the functionalities of this software we refer
to~\cite{Koutschan}.

\subsection{Proof of Lemma~\ref{lem:4}}\label{sec:proof}

\subsubsection{Reduction $r_3+1\rightarrow r_3$}

The first step in our proof is to reduce
the task of proving
\[
S_4(r_1,r_2,r_3)=S_2(r_1,r_2,r_3),
\hspace*{0.3cm} r_1\geq r_2\geq r_3\geq 0,
\]
to proving
\begin{equation}\label{eq:R1R2}
S_4(R_1,R_2,0)=S_2(R_1,R_2,0),
\hspace*{0.3cm} R_1\geq R_2\geq 0.
\end{equation}

This reduction is accomplished by a recurrence which decreases the argument $r_3$ by one.
Such a recurrence is computed with \texttt{qMultiSum} by,
\begin{mma}
\In
Prec=
(qFindRecurrence[
  S2Summand[r1, r2, r3, m1, m2],  
  \{r1, r2, r3\}, \{m1, m2\},
 \newline  \hspace*{3.2cm}   
   \{3, 0, 0\}, \{0, 0\}, 1] 
   // qSumRecurrence[\#, 3] \&)[[1]]
\\
\Out
-q^{7 + 5 r1 + r2} SUM[r1, r2, 1 + r3] - 
  q^{6 + 3 r1 + 2 r3} SUM[1 + r1, 1 + r2, r3] 
  \newline \hspace*{0.3cm}
  - 
  q^{5 + 2 r1 + 2 r3} SUM[2 + r1, 1 + r2, r3] + 
  q^{2 r3} SUM[3 + r1, 1 + r2, r3] = 0
\label{mma:Prec}\\
\end{mma}
Inspection of the shifts shows, the 
recurrence \texttt{Prec} for
\[
S_2(r_1,r_2,r_3)=
\texttt{SUM[r1,r2,r3]}
\]
is valid for 
\begin{equation}\label{eq:ineqs}
 r_1 \geq r_2 \geq r_3+1 \geq 0.
\end{equation}
Consequently, computing
the values $S_2(r_1, r_2, r_3)$, 
$r_1\geq r_2 \geq r_3 \geq 1$, reduces
to finding the values $S_2(R_1, R_2, 0)$
for $R_1\geq R_2\geq 0$. 

In Subsection~\ref{subsec:Prop1Proof}
we shall prove the following fact.
\begin{prop}\label{prop:Prop1}
The recurrence \texttt{Prec} from 
\texttt{Out[\ref{mma:Prec}]} for the
double sum $S_2(r_1,r_2,r_3)$ is also
satisfied by the quadruple sum 
\[
S_4(r_1,r_2,r_3)=
\texttt{SUM[r1,r2,r3]},
\hspace*{0.3cm}
\text{where\, }
r_1 \geq r_2 \geq r_3+1 \geq 0.
\]
\end{prop}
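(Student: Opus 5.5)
We propose to prove Proposition~\ref{prop:Prop1} by the standard creative-telescoping/annihilator strategy, carried out with the RISC packages of Subsection~\ref{sec:CA}, in such a way that the final certificate is checkable by rational-function arithmetic.

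\textbf{Step 1: annihilators of $S_4$.} Using \texttt{qMultiSum}, we compute a collection of sum recurrences for $S_4(r_1,r_2,r_3)$ directly from the summand $s_4$. The call is \texttt{qFindRecurrence} on \texttt{S4Summand} with summation variables $n_1,n_2,n_4,n_5$, followed by \texttt{qSumRecurrence[\#,3]}. Since $s_4$ has four summation variables, a single call will likely not output \texttt{Prec} itself. We therefore aim for several simpler recurrences $R_1,\dots,R_s$, for instance ones of low order in each of the shifts $S_{\text{qr}_1,q},S_{\text{qr}_2,q},S_{\text{qr}_3,q}$, by trying small structure sets and their Verbaeten completions. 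For each $R_i$ we record the summand-level certificate: a relation
\[
\sum c_{\alpha}\,s_4(\text{shifted}) = \sum_{j}\Delta_{n_j}\bigl(\text{rational}\cdot s_4\bigr).
\]
Dividing this relation by $s_4$ reduces it to an identity of rational functions in $q^{r_j},q^{n_j}$, which can be verified independently.

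\textbf{Step 2: boundary terms.} We must check that summing the certificates over all $n_j\ge0$ leaves no boundary contributions. The $q$-binomials give $s_4$ finite support, namely $0\le n_1\le n_4\le r_3$, $0\le n_2\le r_2$ and $n_2\le n_5\le r_1$. The delta parts telescope provided the rational certificates have no poles at the support boundary. They also need to vanish at the boundary in the range $r_1\ge r_2\ge r_3+1\ge 0$, which is exactly why the proposition restricts to the inequalities \eqref{eq:ineqs}; we expect to need these inequalities at this point. Any spurious contributions at the edges, for example $n_5=n_2$ or $n_4=r_3$, we would check by hand as finite identities.

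\textbf{Step 3: ideal membership.} We translate \texttt{Prec} and the $R_i$ into operators in \texttt{alg} via \texttt{ToOrePolynomial}. With \texttt{HolonomicFunctions}, we compute a left Gr\"obner basis of the ideal $\langle R_1,\dots,R_s\rangle$ and reduce \texttt{Prec} modulo it to zero. From this reduction we extract explicit cofactors $C_i\in\KK(q,\text{qr}_1,\text{qr}_2,\text{qr}_3)[S]$ with
\[
\texttt{Prec}=\sum_i C_i\dast R_i.
\]
This is again a polynomial identity that can be checked by expanding the products using \eqref{eq:non-comm}. Applying the identity to $S_4$ gives $\texttt{Prec}\,S_4=0$, provided the cofactors' shifts keep all evaluation points inside the region where each $R_i$ holds. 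We also need the denominators of the $C_i$ to be nonzero there; these we check separately.

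\textbf{Main obstacle.} The hard part is Step 1 combined with Step 3: finding $S_4$-recurrences that are small enough to be computed, yet generate an ideal containing \texttt{Prec}. If the ideal membership fails, the first fallback is to enlarge the structure set. Failing that, we would instead run \texttt{qFindRecurrence} on $s_4$ with the same target structure set \texttt{\{3,0,0\}} used for \texttt{Prec} and compare the outputs directly. A further delicate point is controlling the validity region of the cofactor shifts.
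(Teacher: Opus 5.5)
Your plan is the paper's proof. The paper uses just two $S_4$-recurrences: $P_2$ (\texttt{P2rec}, which $S_2$ and $S_4$ share) and $P_3$ (from \texttt{qFindRecurrence} on $q\,s_4$). It computes a six-element Gr\"obner basis of $\langle P_2,P_3\rangle$ with \texttt{HolonomicFunctions}, reduces $P$ to remainder $0$, and certifies $P=C_2\dast P_2+C_3\dast P_3$ through the extended cofactors.

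Your fallback of computing \texttt{Prec} for $S_4$ directly by \texttt{qFindRecurrence} is exactly what the paper reports failing. The validity-region and denominator checks you flag in Steps~2--3 are left implicit in the paper, so your version is, if anything, more careful.
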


\begin{remark}
We were not able to reproduce the recurrence 
\texttt{Prec} for $S_4(r_1, r_2, r_3)$ by an explicit \texttt{qMultiSum} procedure call as we did in \texttt{In[\ref{mma:Prec}]} for
$S_2(r_1, r_2, r_3)$.
This might be owing to the fact that there are (too) many choices for structure sets of
possible recurrences. In other words,
in practice it can be a complex task to find a particularly structured recurrence as a ``needle in a haystack.'' As a consequence, to 
prove Proposition~\ref{prop:Prop1} we will
use a different tool-box: Koutschan's
\texttt{HolonomicFunctions} for 
non-commutative Gr\"obner
bases computations.
\end{remark}

To proceed with our proof of Lemma~\ref{lem:4},
we now assume that Proposition~\ref{prop:Prop1}
is proven. To prove the equality~\eqref{eq:R1R2},
consider the \texttt{qMultiSum} procedure calls,
\begin{mma}
\In
P2rec=
(qFindRecurrence[
    S2Summand[r1, r2, r3, m1, m2], 
    \{r1, r2, r3\}, \{m1, m2\}, 
 \newline \hspace*{4.9cm}      
    \{0, 3, 0\}, \{0, 0\}, 
    \{1,1\}] 
    // qSumRecurrence[\#, 3] \&)[[8]]
\\
\Out
q^{3 + 3 r1 + 2 r2} SUM[r1, r2, r3] + 
  q^{2 + 2 r1} SUM[1 + r1, 1 + r2, r3] - 
  q^{2 r2} SUM[2 + r1, r2, r3] = 0
\label{mma:P2rec}\\
\end{mma}
and
\begin{mma}
\In
(qFindRecurrence[
    S4Summand[r1, r2, r3, n1, n2, n4, n5], 
    \{r1, r2, r3\}, \{n1, n2, n4, n5\}, 
  \newline \hspace*{3.5cm}       
    \{0, 0, 0\}, \{2, 1, 0, 1\},
     \{0, 0, 1, 1\}] // qSumRecurrence[\#, 3] \&)[[1]]
\\
\Out
q^{3 + 3 r1 + 2 r2} SUM[r1, r2, r3] + 
  q^{2 + 2 r1} SUM[1 + r1, 1 + r2, r3] - 
  q^{2 r2} SUM[2 + r1, r2, r3] = 0
\\
\end{mma}
This means, the recurrence \texttt{P2rec}
as in \texttt{Out[\ref{mma:P2rec}]} is
satisfied by both $S_2(r_1,r_2,r_3)$ and
$S_4(r_1,r_2,r_3)$.
After setting $r_3=0$ and replacing
the remaining
$r_j$ by $R_j$, it rewrites as,
\begin{equation}\label{eq:forCaseA zero}
q^{2 R_2} S_j(2 + R_1, R_2, 0)=
q^{2 + 2 R_1}
   S_j(1 + R_1, 1 + R_2, 0) + 
q^{3 + 3 R_1 + 2 R_2} S_j(R_1, R_2, 0),
\end{equation}
which is valid for $j\in\{2,4\}$ and
the domain
\begin{equation}\label{eq:domain}
R_1\geq R_2\geq 0.
\end{equation}

As a consequence, given the values of 
$S_j(R_1,R_2,0)$, $j\in\{2,4\}$, on the
boundary ``diagonals'',
\[
\{(R_1,R_1,0),R_1\geq 0\} 
\text{\, and\, }
\{(R_1,R_1-1,0),R_1\geq 1\},
\]
the remaining values in the
domain~\eqref{eq:domain} are
determined by the recurrence~\eqref{eq:forCaseA zero}.

To prove $S_2(R_1,R_1,0)=S_4(R_1,R_1,0)$ we
compute a common recurrence on the main diagonal
as follows,
\begin{mma}
\In
qFindRecurrence[
  S2Summand[R1, R1, 0, m1, m2], 
  \{r1\}, \{m1, m2\}, \{1\}, \{1, 1\},
\newline \hspace*{5.3cm}  
   \{1,1\}] // 
 qSumRecurrence[\#, 1] \&
\\
\Out
\{-q^{14 + 9 R1} (-1 + q^{1 + R1})
 (-1 + q^{2 + R1}) SUM[R1] - 
   q^{12 + 6 R1} (-1 + q^{2 + R1})
    (1 + q^{2 + R1}) SUM[1 + R1] 
\newline \hspace*{0.5cm}    
  + q^{8 + 3 R1} (1 + 2 q^{2 + R1}) SUM[2 + R1] - SUM[3 + R1] = 0\}
\\
\end{mma}
To show that the same recurrence can be
computed also for $S_4(R_1,R_1,0)$,
in view of
\begin{align*}
S_4(R_1,R_2,0)
&=
\sum_{n_1,n_2,n_4,n_5\geq 0} 
q^{Q(R_1,R_2,0,n_1,n_2,n_4, n_5)}
\qbin{0}{n_4}{q}\qbin{n_4}{n_1}{q}
\qbin{R_2}{n_2}{q}\qbin{R_1-n_2}{n_5-n_2}{q}\\
&=
\sum_{n_2,n_5\geq 0} 
q^{Q(R_1,R_2,0,0,n_2, 0, n_5)}
\qbin{R_2}{n_2}{q}\qbin{R_1-n_2}{n_5-n_2}{q},
\end{align*}
it will be convenient to define
\begin{mma}
\In
S4r3ZeroSd[R1\_, R2\_, n2\_, n5\_] := 
q^{Q[0, n2, 0, n5, 0, R2, R1]} 
  qBinomial[R2, n2, q]
 \newline \hspace*{5.5cm}   
   \ast qBinomial[R1 - n2, n5 - n2, q]
\\
\end{mma}
Now we are ready to compute a recurrence
for the double sum $S_4(R_1,R_1,0)$,
\begin{mma}
\In
qFindRecurrence[
  S4r3ZeroSd[R1, R1, n2, n5], 
  \{R1\}, \{n2, n5\}, \{1\}, \{1, 1\},
 \newline \hspace*{5.5cm}     
  \{1, 1\}] // qSumRecurrence[\#, 1] \&
\\
\Out
\{-q^{14 + 9 R1} (-1 + q^{1 + R1})
 (-1 + q^{2 + R1}) SUM[R1] - 
   q^{12 + 6 R1} (-1 + q^{2 + R1})
    (1 + q^{2 + R1}) SUM[1 + R1] 
\newline \hspace*{0.5cm}    
  + q^{8 + 3 R1} (1 + 2 q^{2 + R1}) SUM[2 + R1] - SUM[3 + R1] = 0\}
\\
\end{mma}
Inspection shows that the two recurrences 
of order $3$ coincide. Hence showing the equality,
\[
S_2(R_1,R_1,0)=S_4(R_1,R_1,0), 
\hspace*{0,3cm} R_1 \geq 0,
\]
is reduced to showing the equality at
the three initial values; i.e.,
\begin{equation*}
S_2(R_1,R_1,0)=S_4(R_1,R_1,0)
\text{\, for\, } R_1\in \{0,1,2\}.
\end{equation*}
This task is elementary.

To prove $S_2(R_1,R_1-1,0)=S_4(R_1,R_1-1,0)$ we
compute a common recurrence on the subdiagonal
as follows,
\begin{mma}
\In
(qFindRecurrence[
  S2Summand[R1, R1-1, 0, m1, m2], 
  \{r1\}, \{m1, m2\}, \{1\}, \{1, 1\},
\newline \hspace*{5.3cm}  
   \{1,1\}] // 
 qSumRecurrence[\#, 1] \&)[[1]]
\\
\Out
-q^{10 + 9 R1} (-1 + q^{R1}) (-1 + q^{1 + R1}) SUM[R1] - 
  q^{10 + 6 R1} (-1 + q^{1 + R1}) (1 + q^{1 + R1}) SUM[1 + R1] 
\newline \hspace*{0.3cm}  
  + 
  q^{6 + 3 R1} (1 + 2 q^{2 + R1}) SUM[2 + R1] - SUM[3 + R1] = 0
\\
\end{mma}
and
\begin{mma}
\In
(qFindRecurrence[
    S4r3ZeroSd[R1, R1 - 1, n2, n5], 
    \{R1\}, \{n2, n5\}, \{1\}, \{1, 1\},
\newline \hspace*{5.3cm}     
    \{1, 1\}] // qSumRecurrence[\#, 1] \&)[[1]]
\\
\Out
-q^{10 + 9 R1} (-1 + q^{R1}) (-1 + q^{1 + R1}) SUM[R1] - 
  q^{10 + 6 R1} (-1 + q^{1 + R1}) (1 + q^{1 + R1}) SUM[1 + R1] 
\newline \hspace*{0.3cm}  
  + 
  q^{6 + 3 R1} (1 + 2 q^{2 + R1}) SUM[2 + R1] - SUM[3 + R1] = 0
\\
\end{mma}
Again both recurrences coincide and are of
order $3$. Hence showing,
\begin{equation*}
S_2(R_1,R_1-1,0)=S_4(R_1,R_1-1,0)
\text{\, for\, } R_1\in \{1,2,3\},
\end{equation*}
completes the proof of $S_2(R_1,R_1-1,0)=S_4(R_1,R_1-1,0)$ for
$R_1\geq 1$.

\subsubsection{Proof of Proposition~\ref{prop:Prop1}}
\label{subsec:Prop1Proof}

To prove that the $S_2$-recurrence \texttt{Prec}
from \texttt{Out[\ref{mma:Prec}]} is satisfied
also by $S_4(r_1,r_2,r_3)$, we shall represent recurrences
as $q$-shift operators. Recall that in order
to use the \texttt{HolonomicFunctions} package
in our context, in \texttt{In[\ref{mma:alg}]} we 
specified the non-commutative algebra of $q$-shift operators,
\[
\mathrm{alg}=
\KK(q, qr_1, qr_2, qr_3)[
S_{qr_1, q}, S_{qr_2, q}, S_{qr_3, q}],
\]
with relation~\eqref{eq:non-comm} as the rule
of non-commutativity and writing ``$\dast$'' for
non-commutative multiplication, if required by
the package. In our setting, $\KK=\Q$.

Recall, the recurrence \texttt{P2rec} from \texttt{Out[\ref{mma:P2rec}]} is also satisfied
by $S_4(r_1,r_2,r_3)$. We determine its operator representation as we did for \texttt{P1rec} in
\texttt{Out[\ref{mma:P1}]},
\begin{mma}
\In
P_2 = ToOrePolynomial[
q^{2 r_1+2} {QS}[{qr}_1,q^{r_1}]\dast
   {QS}[{qr}_2,q^{r_2}]-
   q^{2 r_2}
   {QS}[{qr}_1,q^{r_1}]^2+
   q^{3r_1+2 r_2+3}, alg]
\\
\Out
q^2 \text{qr}_1^2 S_{\text{qr}_1,q}
   S_{\text{qr}_2,q}-\text{qr}_2^2
   S_{\text{qr}_1,q}^2+q^3 \text{qr}_2^2 \text{qr}_1^3
\label{mma:P2}\\
\end{mma}

Computational experiments show that we will need 
another recurrence satisfied by $S_4(r_1,r_2,r_3)$,
namely,
\begin{mma}
\In
P3rec =
\newline \hspace*{0.3cm}
(qFindRecurrence[q\ast
    S4Summand[r1, r2, r3, n1, n2, n4, n5], 
    \{r1, r2, r3\}, \{n1, n2, n4, n5\}, 
\newline \hspace*{3.5cm}    
    \{0, 0, 1\}, \{1, 1, 1, 0\}, \{0, 1, 1, 1\}] 
    // qSumRecurrence[\#, 3] \&)[[1]]
\\
\Out
-q^{1 + 2 r1 + r2 + r3} SUM[r1, r2, 1 + r3] + 
  q^{ r2 + 2 r3} (-1 + q^{1 + r2 + r3}) SUM[1 + r1, r2, r3] + 
  \newline \hspace*{0.5cm}    
  q^{-1+r2} SUM[1 + r1, r2, 1 + r3] - 
  q^{3 r3} SUM[1 + r1, 1 + r2, r3] = 0
\\
\end{mma}
We convert this recurrence into operator
\footnote{In order to avoid negative integers
in the $q$-powers, we multiply each coefficient
of the operator
with $q$.} form,
\begin{mma}
\In
P_3 = 
ToOrePolynomial[
-q^{2 + 2 r_1 + r_2 + r_3} 
QS[{qr}_3, q^{r_3}] + 
   q^{1 + r_2 + 2 r_3} (-1 + q^{1 + r_2 + r_3})
    QS[{qr}_1, q^{r_1}] 
   \newline \hspace*{1.5cm}    
    + 
   q^{r_2} QS[{qr}_1, q^{r_1}] \dast 
   QS[{qr}_3, q^{r_3}] - 
   q^{1 + 3 r_3} QS[{qr}_1, q^{r_1}] \dast 
   QS[{qr}_2, q^{r_2}], alg]
\\   
\Out
-q
   \text{qr}_3^3
   S_{\text{qr}_1,q}
   S_{\text{qr}_2,q}+
   \text{qr}_2S_{\text{qr}_1,q}S_{\text{qr}_3,q}+\left(-q
   \text{qr}_2 \text{qr}_3^2+q^2 
   \text{qr}_2^2
   \text{qr}_3^3\right)
   S_{\text{qr}_1,q}-q^2
   \text{qr}_1^2 \text{qr}_2
   \text{qr}_3S_{\text{qr}_3,q}
\\
\end{mma} 

Finally, again using the procedure call
\texttt{ToOrePolynomial}
we convert the recurrence \texttt{Prec} into operator form and obtain
\begin{mma}
\In
P={qr}_3^2 {S}_{{qr}_1,q} {S}_{{qr}_2,q}
-q^5
   {qr}_1^2
   {qr}_3^2 S_{{qr}_1,q} 
   S_{{qr}_2,q}-q^6
   {qr}_1^3
   {qr}_3^2 S_{{qr}_1,q} S_{{qr}_2,q}-
   q^7
   {qr}_1^5 {qr}_2 S_{{qr}_3,q}
\label{mma:P}\\
\end{mma}

Let $R$ be the subalgebra of $\mathrm{alg}$
containing all $q$-shift operators annihilating
$S_4(r_1,r_2,r_3)$. The operators $P_2$ and 
$P_3$ generate a left ideal $I$ of $R$,  
\[
I:= \langle P_2, P_3 \rangle
=
\{ \alpha \dast P_2 + \beta\dast P_3\mid
\alpha, \beta\in \mathrm{alg}\} \subseteq R.
\]
Our goal is to show that $P\in R$.
To this end, with Koutschan's \texttt{HolonomicFunctions} package we compute
a Gr\"obner basis $G=\{g_1,\dots,g_n\}$ for the
ideal $I$; i.e., 
\[
I =\langle g_1,\dots, g_n \rangle.
\]
The decisive consequence of such a $G$ is this:
if $P\in R$ then
\[
P=c_1\dast g_1 +\dots + c_n\dast g_n
\text{\, for certain\, }
c_1, \dots, c_n \in \mathrm{alg};
\]
moreover, these operators $c_j$ can be determined
algorithmically. 

With \texttt{HolonomicFunctions} one computes 
$G$ as follows,
\begin{mma}
\In
G = OreGroebnerBasis[\{P_2,P_3\}]
\\
\Out
\big\{
\left(q^6 \text{qr}_2^2 \text{qr}_3^5-q^7
   \text{qr}_2^3 \text{qr}_3^6\right)
   S_{\text{qr}_1,q}+\text{qr}_1 \text{qr}_2^2
   S_{\text{qr}_3,q}^2 
\newline \hspace*{0.5cm}   
   +\left(q^8 \text{qr}_1 \text{qr}_2^2 
   \text{qr}_3^7-q^8
   \text{qr}_1^2 \text{qr}_2 \text{qr}_3^7-q^7
   \text{qr}_1 \text{qr}_2^2 \text{qr}_3^6+q^7
   \text{qr}_1^2 \text{qr}_2 \text{qr}_3^6-q^6
   \text{qr}_1^2 \text{qr}_3^5+q^6 \text{qr}_1
   \text{qr}_2 \text{qr}_3^5\right) 
   S_{\text{qr}_2,q}
 \newline \hspace*{0.5cm}   
   +  
 \big(-q^9 \text{qr}_1^2
 \text{qr}_2^3
   \text{qr}_3^6+q^8 \text{qr}_1^2 \text{qr}_2^3
   \text{qr}_3^5-q^7 \text{qr}_1 \text{qr}_2^3
   \text{qr}_3^4-q^6 \text{qr}_1 \text{qr}_2^3
   \text{qr}_3^4+q^6 \text{qr}_1 \text{qr}_2^3
   \text{qr}_3^3+q^5 \text{qr}_1 \text{qr}_2^3
   \text{qr}_3^3
  \newline \hspace*{1.2cm} 
   -q^5 \text{qr}_1 \text{qr}_2^2
   \text{qr}_3^2 
   -q^4 \text{qr}_1 \text{qr}_2^2
   \text{qr}_3^2-q^3 \text{qr}_1 \text{qr}_2^2
   \text{qr}_3^2\big) S_{\text{qr}_3,q}
 \newline \hspace*{0.5cm}   
   -  
  q^9 \text{qr}_1 \text{qr}_2^4
   \text{qr}_3^7+q^8 \text{qr}_1 \text{qr}_2^4
   \text{qr}_3^6+q^8 \text{qr}_1 \text{qr}_2^3
   \text{qr}_3^6-2 q^7 \text{qr}_1 \text{qr}_2^3
   \text{qr}_3^5-q^6 \text{qr}_1 \text{qr}_2^3
   \text{qr}_3^5+q^6 \text{qr}_1 \text{qr}_2^2
   \text{qr}_3^4+q^5 \text{qr}_1 \text{qr}_2^2
   \text{qr}_3^4,   
 \newline \hspace*{0.3cm} 
 -q^2 \text{qr}_3 \text{qr}_2^2 S_{\text{qr}_3,q}-q
   \text{qr}_3^2 S_{\text{qr}_2,q}+S_{\text{qr}_2,q}
   S_{\text{qr}_3,q},  
 \newline \hspace*{0.3cm} 
\left(-q^4 \text{qr}_1 \text{qr}_2^3 \text{qr}_3^5
-q^4
   \text{qr}_1 \text{qr}_2^3 \text{qr}_3^4-q^2
   \text{qr}_1 \text{qr}_2^2 \text{qr}_3^3+q^2
   \text{qr}_1^2 \text{qr}_2 \text{qr}_3^3\right)
   S_{\text{qr}_2,q}
   +\left(q^3 \text{qr}_2^4
   \text{qr}_3^4-q^2 \text{qr}_2^3
   \text{qr}_3^3\right) S_{\text{qr}_1,q}
  \newline \hspace*{0.6cm}   
   +\left(q
   \text{qr}_1 \text{qr}_2 \text{qr}_3^4-\text{qr}_1^2
   \text{qr}_3^4\right) S_{\text{qr}_2,q}^2+q
   \text{qr}_1 \text{qr}_2^3 S_{\text{qr}_3,q}+q^5
   \text{qr}_1 \text{qr}_2^5 \text{qr}_3^5-q^4
   \text{qr}_1 \text{qr}_2^4 \text{qr}_3^4+q^3
   \text{qr}_1 \text{qr}_2^4 \text{qr}_3^3-q^2
   \text{qr}_1 \text{qr}_2^3 \text{qr}_3^2,
   \newline \hspace*{0.3cm}     
   \left(-q^2 \text{qr}_1^2 \text{qr}_3^2
   \text{qr}_2^2-\text{qr}_1 \text{qr}_2^2\right)
   S_{\text{qr}_3,q}+\text{qr}_2^2 \text{qr}_3
   S_{\text{qr}_1,q} S_{\text{qr}_3,q}+\left(q
   \text{qr}_1 \text{qr}_2 \text{qr}_3^3-q
   \text{qr}_1^2 \text{qr}_3^3\right)
   S_{\text{qr}_2,q}-q^2 \text{qr}_1 \text{qr}_2^3
   \text{qr}_3^3+q \text{qr}_1 \text{qr}_2^2
   \text{qr}_3^2,
 \newline \hspace*{0.3cm}
\left(q^2 \text{qr}_2^3 \text{qr}_3^4-q \text{qr}_2^2
   \text{qr}_3^3\right) S_{\text{qr}_1,q}-q
   \text{qr}_2 \text{qr}_3^4 S_{\text{qr}_1,q}
   S_{\text{qr}_2,q}+\left(q \text{qr}_1^2
   \text{qr}_3^3-q \text{qr}_1 \text{qr}_2
   \text{qr}_3^3\right) S_{\text{qr}_2,q}+\text{qr}_1
   \text{qr}_2^2 S_{\text{qr}_3,q}
  \newline \hspace*{0.6cm}  
   +q^2 \text{qr}_1
   \text{qr}_2^3 \text{qr}_3^3-q \text{qr}_1
   \text{qr}_2^2 \text{qr}_3^2,    
 \newline \hspace*{0.3cm}
 \left(q^2 \text{qr}_1^3 \text{qr}_2 \text{qr}_3^3-q^2
   \text{qr}_1^4 \text{qr}_3^3\right)
   S_{\text{qr}_2,q}+\left(q^2 \text{qr}_1^2
   \text{qr}_2^2 \text{qr}_3^3-q^3 \text{qr}_1^2
   \text{qr}_2^3 \text{qr}_3^4\right)
   S_{\text{qr}_1,q}+\text{qr}_2^3 \text{qr}_3^4
   S_{\text{qr}_1,q}^2-q \text{qr}_1^3 \text{qr}_2^2
   S_{\text{qr}_3,q}
  \newline \hspace*{0.6cm}     
   -q^3 \text{qr}_1^3 \text{qr}_2^3
   \text{qr}_3^4-q^3 \text{qr}_1^3 \text{qr}_2^3
   \text{qr}_3^3+q^2 \text{qr}_1^3 \text{qr}_2^2
   \text{qr}_3^2 \big\}     
\\
\In
\{g_1,g_2,g_3,g_4,g_5,g_6\}=G;
\label{mma:G}
\\
\end{mma}

We see that the Gr\"obner basis $G$ consists
of $6$ elements. Our proof is completed with
the following \texttt{HolonomicFunctions}
procedure call; a description
of its functionality can be obtained as follows,
\begin{mma}
\In
?OreReduce
\\
\Print
OreReduce[p, \{g1, g2, ...\}] reduces the OrePolynomial p with the \
Groebner basis given by \{g1, g2, ...\}. 
\newline
The option ``Extended $\rightarrow$ True''
computes also the cofactors; i.e., the output 
is 
\newline
\{r, f, \{c1, c2, $\dots$\}\} such that 
f $\dast$ p = r + c1 $\dast$ g1 
+ c2 $\dast$ g2 + $\dots$ . 
\\ \label{mma:OreRed}
\end{mma} 
Finally, we are ready to reduce $P$ from 
\texttt{In[\ref{mma:P}]} with respect to
the Gr\"obner basis $G$  from 
\texttt{In[\ref{mma:G}]},
\begin{mma}
\In
\{r,f,\{c_1,c_2,c_3,c_4,c_5,c_6\}\}=
OreReduce[P, \{g_1,g_2,g_3,g_4,g_5,g_6\}, 
Extended \rightarrow True] // Factor
\\
\Out
\Big\{ 0, 1,
\Big\{
0,0,0,
\frac{q \text{qr1} }{\text{qr2}
   \text{qr3}^3}S_{\text{qr1},q}+\frac{q^5 \text{qr1}^3}{\text{qr2}
   \text{qr3}^2},
\newline \hspace*{1.2cm}
 -\frac{1}{q \text{qr2} \text{qr3}^2}  S_{\text{qr1},q}^2
 +
 \frac{q \text{qr1} \left(q^3
   \text{qr1} \text{qr2} \text{qr3}-q^2
   \text{qr1}+\text{qr2}\right)}{\text{qr2}^2
   \text{qr3}^3}S_{\text{qr1},q} 
   +
 \frac{q^5 \text{qr1}^3}{\text{qr2} \text{qr3}^2},
\frac{(q \text{qr2} \text{qr3}-1)
   }{\text{qr2}^2 \text{qr3}^3} S_{\text{qr1},q}  
\Big\}\Big\}
\\
\end{mma}
Matching this output to the 
description from \texttt{In[\ref{mma:OreRed}]}
we obtained $(r,f)=(0,1)$ and thus,
\begin{equation}\label{eq:Pcomb}
P= c_1 \dast g_1 + \dots + c_6 \dast g_6.
\end{equation}
In other words, $P\in R$, which completes
the proof of Proposition~\ref{prop:Prop1} 
and thus Lemma~\ref{lem:4} is proven.

\begin{remark}
If $P\not\in R$ then
the algorithm would detect this with $r\neq 0$, and we would
need to enrich the ideal $I$ with further
operators annihilating $S_2(r_1,r_2,r_3)$.
Following our proof strategy,
these operators, as $P_2$ and $P_3$, would 
be computed using the \texttt{qMultiSum}
package.
\end{remark}

\begin{remark}
The second element of the Gr\"obner basis $G$
is
\begin{mma}
\In 
g_2
\\
\Out
-q^2 \text{qr}_3 \text{qr}_2^2 S_{\text{qr}_3,q}-q
   \text{qr}_3^2 S_{\text{qr}_2,q}+S_{\text{qr}_2,q}
   S_{\text{qr}_3,q}
\\
\end{mma}
Recalling \texttt{Out[\ref{mma:P1}]},
this is the operator form $P_1$ of
the recurrence
\texttt{P1rec} which we computed with \texttt{qMultiSum}. In this
case, being 
a member of the Gr\"obner basis $G$ of $I$,
adding $P_1$ to the
set of generators $\{P_2, P_3\}$ would not
change the ideal $I=\langle P_2, P_3\rangle=
\langle P_1, P_2, P_3\rangle$. In other words,
$q$-shift operators computed with \texttt{qMultiSum}
not necessarily enrich the ideal under consideration.
\end{remark}

\subsubsection{Independent Verification of
the Proof of Proposition~\ref{prop:Prop1}}
\label{subsec:Prop1ProofIndep}

Remarkably, the algorithms used
in our computer-assisted proof of Lemma~\ref{lem:4}
are designed in a way which allows 
verifications of the
computed results \textit{independently from the
steps in which implementations of these 
algorithms have computed them}.

Independent verification of the sum recurrences
computed by the \texttt{qMultiSum} package can be
done by direct verification of the underlying (certificate)
recurrences for the summands which also were
computed with \texttt{qMultiSum}.  
This task boils down
to zero recognition of multivariate rational
functions. For further details
see~\cite{WZ} and~\cite{Riese}.

Independent verification of our Gr\"obner basis
proof of Proposition~\ref{prop:Prop1} is also
possible. To this end, one invokes Koutschan's
Gr\"obner bases call \texttt{OreGroebnerBasis}
together with the
option ``\texttt{Extended $\rightarrow$ True}'',
\begin{mma}
\In
\{G, M\} 
= 
 OreGroebnerBasis[\{P_2, P_3\}, 
  Extended \rightarrow True] //Simplify;
\\
\end{mma}
The output is the Gr\"obner basis $G=\{g_1,\dots,
g_6\}$ from \texttt{Out[\ref{mma:G}]} 
together with a $6\times 2$ matrix
$M=(m_{i,j})$, $1\leq i\leq 6$, $1\leq j\leq 2$,
of $q$-shift operators. These $m_{i,j}\in \mathrm{alg}$, called the co-factors, have
the property,
\begin{align*}
g_1&=m_{1,1}\dast P_2 + m_{1,2}\dast P_3,\\
g_2&=m_{2,1}\dast P_2 + m_{2,2}\dast P_3,\\
&\dots\\
g_6&=m_{6,1}\dast P_2 + m_{6,2}\dast P_3.
\end{align*}
Consequently, for the operator
$P$ from~\eqref{eq:Pcomb} we have,
\begin{align}
P &= \sum_{k=1}^6 c_k\dast g_k 
=
\sum_{k=1}^6 c_k\dast 
(m_{k,1}\dast P_2 + m_{k,2}\dast P_3) 
\nonumber\\
&=
\underbrace{\sum_{k=1}^6 (c_k\dast 
m_{k,1})}_{=: C_2}\dast P_2 +
\underbrace{\sum_{k=1}^6 (c_k\dast 
m_{k,2})}_{=: C_3}\dast P_3.
\label{eq:PindCheck}
\end{align}
Summarizing, to prove Proposition~\ref{prop:Prop1}
for \texttt{Prec},
we need to compute the operator forms
$P, P_2$ and $P_3$ of the respective
recurrences.
These recurrences are obtained with \texttt{qMultiSum}; independent 
verification of the correctness
of these recurrences is done on the level
of summand recurrences, also delivered
by \texttt{qMultiSum}. To prove that $P$ annihilates
also $S_4(r_1,r_2,r_3)$, it suffices
to verify relation~\eqref{eq:PindCheck}. This
verification is a routine exercise in the
algebra $\mathrm{alg}$ of $q$-shift operators,
once the constituents $c_k$ and $m_{i,j}$ are 
delivered by the \texttt{HolonomicFunctions}
package as above. We stress again that  
for the verification of~\eqref{eq:PindCheck}
one only needs the explicit forms of the
$c_k$ and $m_{i,j}$; in other words, it is
irrelevant in which way they have been obtained.

We conclude this section with the 
\texttt{HolonomicFunctions} procedure call
to execute the independent verification
of $P\in R$, an elementary computation
in the operator algebra,
\begin{mma}
\In
P-\left(\sum_{k=1}^6 c_k\dast M[[k,1]]\right)
\dast P_2
-
\left(\sum_{k=1}^6 c_k\dast M[[k,2]]\right)
\dast P_3 \hspace*{0.3cm}//Simplify
\label{mma:PindCheck}\\
\Out
0
\\
\end{mma}

\section{Conjecture~\ref{conj:sum} implies Conjecture~\ref{conj:HJO}, $a=3$}\label{sec:sum-implies-hjo}
\begin{proof}
    [Proof that Conjecture~\ref{conj:sum} for $b$ implies Conjecture~\ref{conj:HJO} for $(3,b)$]
    Let the left-hand side of \eqref{eq:sum-3k+1} (if $b=3k+1$) or \eqref{eq:sum-3k-1} (if $b=3k-1$) be denoted by $S(\mathbf{r})$. Then it can be observed from the definition that
    \[ Z_{3,b}(q)=\sum_{r_1,\dots,r_k} \frac{1}{(q)_{r_1}}\left(\prod_{i=1}^{k-1}\qbinom{r_i}{r_{i+1}}\right)S(\mathbf{r}).\]
    Replacing $S(\mathbf{r})$ by the right-hand side of \eqref{eq:sum-3k+1}  or \eqref{eq:sum-3k-1}, $Z_{3,b}(q)$ precisely becomes the left-hand side of \cite[(1.11)]{WarnaarAG} with $\mathbf{k}=k+1$ (if $b=3k+1$) or \cite[(1.10)]{WarnaarAG} with $\mathbf{k}=k$ (if $b=3k-1$); here $\mathbf{k}$ denotes $k$ in \cite{WarnaarAG}. Hence $Z_{3,b}(q)$ equals to the corresponding right-hand side of \cite[(1.10) or (1.11)]{WarnaarAG}, which is precisely $P_{3,b}(q)$.
\end{proof}

\section{Appendix: AxiomProver and Formal Certificate}
AxiomProver is an AI system currently under development by Axiom Math. It was used to generate a formal certificate for the results in this paper (i.e. Theorems \ref{thm:main}, \ref{thm:sum}, \ref{thm:q=1}), relative to existing literature (i.e. the results in \cite{WarnaarAG}). The formal certificate can be found in:
\begin{center}
\url{https://github.com/AxiomMath/HJOa3Small}
\end{center}
It contains a formal challenge file containing the statements of the results, which can be mechanically verified using the Comparator tool in Lean.

\end{document}